\newcommand{\E}{\mathbb{E}}
\newcommand{\R}{\mathbb{R}}
\newcommand{\MMD}{\mathrm{MMD}}
\newtheorem{theorem}{Theorem}
\newtheorem{corollary}{Corollary}
\newtheorem{lemma}{Lemma}
\newtheorem{observation}{Observation}
\begin{document}

\title{On the High-dimensional Power of Linear-time Kernel Two-Sample Testing under Mean-difference Alternatives}

\author{\\
Aaditya Ramdas$^{12}$\footnote{Both student authors had equal contribution.}\\
\texttt{aramdas@cs.cmu.edu}\\ 
\and \\
Sashank J. Reddi$^{2*}$\\
\texttt{sjakkamr@cs.cmu.edu}\\
\and \\
Barnab\'{a}s P\'{o}cz\'os$^2$\\
\texttt{bapoczos@cs.cmu.edu} \\
\and \\
Aarti Singh$^2$ \\
\texttt{aarti@cs.cmu.edu} \\ 
\and \\
Larry Wasserman$^{12}$ \\
\texttt{larry@stat.cmu.edu} \\
\and \\
Department of Statistics$^1$ and Machine Learning Department$^2$\\
Carnegie Mellon University\\ \\
}

% If your paper is accepted and the title of your paper is very long,
% the style will print as headings an error message. Use the following
% command to supply a shorter title of your paper so that it can be
% used as headings.
%
%\runningtitle{I use this title instead because the last one was very long}

% If your paper is accepted and the number of authors is large, the
% style will print as headings an error message. Use the following
% command to supply a shorter version of the authors names so that
% they can be used as headings (for example, use only the surnames)
%
%\runningauthor{Surname 1, Surname 2, Surname 3, ...., Surname n}
%
%\twocolumn[
%
%\aistatstitle{On the High Dimensional Power  of a General Nonparametric\\ Two Sample Test under a Mean-shift Alternative}
%
%\aistatsauthor{ Anonymous Author 1 \And Anonymous Author 2 \And Anonymous Author 3 }
%
%\aistatsaddress{ Unknown Institution 1 \And Unknown Institution 2 \And Unknown Institution 3 } ]

\maketitle

\begin{abstract}
Nonparametric two sample testing deals with the question of consistently deciding if two distributions are different, given samples from both, without making any parametric assumptions about the form of the distributions. The current literature is split into two kinds of tests - those which are consistent without any assumptions about how the distributions may differ (\textit{general} alternatives), and those which are designed to specifically test easier alternatives, like a difference in means (\textit{mean-shift} alternatives).
 
The main contribution of this paper is to explicitly characterize the power 
of a popular nonparametric two sample test, designed for general alternatives, under a mean-shift alternative in the  high-dimensional setting.
Specifically, we explicitly derive the power of the linear-time Maximum Mean Discrepancy statistic using the Gaussian kernel, where the dimension and sample size can both tend to infinity at any rate, and the two distributions differ in their means. As a corollary, we find that if the signal-to-noise ratio is held constant, then the test's power goes to one if the number of samples increases faster than the dimension increases. This is the first explicit power derivation for a general nonparametric test in the high-dimensional setting, and also the first analysis of how tests designed for general alternatives perform when faced with easier ones.
\end{abstract}

\section{Introduction}

The central topic of this paper is nonparametric two-sample testing, in which we try to detect a difference between two $d$-dimensional distributions $P$ and $Q$ based on $n$ samples from both, i.e. deciding whether two samples are drawn from the same distribution. We will be concerned with the following two settings, the first of which deals with \textit{general} alternatives (GA), i.e.
\begin{equation}\tag{GA}
H_0 : P = Q ~\mbox{~ v.s. ~}~ H_1 : P \neq Q.
\end{equation}
It is called nonparametric two-sample testing because no parametric assumptions are made about the form of $P,Q$ (like Gaussianity or exponential families). We use the term \textit{general} alternatives to mean that the difference between $P,Q$ need not have a simple form. In contrast, the second setting that we are concerned about deals with \textit{mean-shift} alternatives (MSA), i.e.
\begin{equation}\tag{MSA}
H_0 : \mu_P = \mu_Q ~\mbox{~ v.s. ~}~ H_1 : \mu_P \neq \mu_Q
\end{equation}
where $\mu_P = \E_{X \sim P}[X]$ and $\mu_Q = \E_{Y \sim Q}[Y]$. It is still nonparametric two-sample testing, since we make no assumptions about $P,Q$, but deals with \textit{easier} alternatives, meaning that we specify the exact form in which $P$ and $Q$ differ, i.e. they differ in their means. Parametric two-sample testing (for example, when $P,Q$ are Gaussian) is also important, but will be out of the scope of our discussion; see \cite{lopes} for a recent example. We assume equal number  $n$ of samples for simplicity; our results would also go through if $n_1/(n_1+n_2) \rightarrow c \in (0,1)$ as $n_1,n_2 \rightarrow \infty$.

\subsection{Hypothesis testing terminology}

Let $X^{(n)} = \{x_1,...,x_n\} \sim P$ and $Y^{(n)} = \{y_1,...,y_n\} \sim Q$ be the two sets of samples, where $x_i,y_j \sim \R^d$ for all $1 \leq i,j \leq n$. A \textit{test} is any function or algorithm that takes $X^{(n)}, Y^{(n)}$ as input, and outputs $\{0,1\}$ where $1$ is interpreted to mean that it \textit{rejects the null hypothesis $H_0$}, and $0$ is interpreted to mean that \textit{there is insufficient evidence to reject $H_0$}. 
A test is characterized by its false positive rate or type-1 error 
$$\alpha = P( \mbox{rejecting }H_0 ~|~ H_0\mbox{ is true})$$ 
and its false negative rate or type-2 error 
$$\beta= P( \mbox{not rejecting } H_0 ~|~ H_1\mbox{ is true} ).$$
There is usually a tradeoff involved - decreasing one error rate increases the other. Hence, one sometimes fixes $\alpha$ to some small value (say $0.01$), and refers to  $\phi = 1-\beta$ as the \textit{power of the test at $\alpha=0.01$}. A test is classically called consistent if for any fixed $\alpha$, the power $\phi \rightarrow 1$ as $n \rightarrow \infty$ whenever $H_0$ is false.

Many tests in the literature, including the ones we will consider, calculate a test statistic $T$ (as a function of $X^{(n)}, Y^{(n)}$), and reject the null hypothesis if $T > c_\alpha$, where the threshold $c_\alpha$ depends on the distribution of $T$ under $H_0$ and on a pre-defined $\alpha$. See \cite{lehmann06} for a detailed introduction.

\subsection{Motivation}

Our first motivation comes from the fact that there is a big difference between the classical setting of fixing $d$ while letting $n \rightarrow \infty$, and the high-dimensional (HD) setting obtained when
\begin{equation}\tag{HD}
(n,d) \rightarrow \infty
\end{equation} 
A test would be called consistent under HD if for any fixed $\alpha$, the power $\phi \rightarrow 1$ as $(n,d) \rightarrow \infty$ whenever $H_0$ is false. 
It is of vital importance, both theoretically and practically, to understand the power of tests in such settings, and to characterize the rate at which $n$ must grow as a function of $d$ so that the test is still consistent. While classical tests were proposed for the low-dimensional settings, over the past two decades several tests have been proposed specifically for MSA and studied in the HD setting; see Subsection \ref{sec:refmsa}. However, to the best of our knowledge there has been no formal and precise characterization of power of tests designed for GA in high dimensions. 

Our second motivation comes from the observation that there is no literature on how tests designed for GA perform under MSA. In other words, while it is expected that tests designed for MSA will not be consistent against more general GA, it is unclear how exactly tests designed for general alternatives fare when when faced with a mean-shift alternative.

\subsection{Related Work (MSA)}\label{sec:refmsa}

%Most literature with formal power guarantees under HD are for MSA, and we shall discuss some of these here. 
It is well known (see \cite{kariya81,simaika41,anderson58,salaevskii71}) that if $P,Q$ are Gaussians, then the uniformly most powerful test in the fixed-dimension setting under fairly general conditions, is the T-test by \cite{hotelling} : 
$$
T_H := (m_P - m_Q)^T S^{-1} (m_P - m_Q)
$$

where $m_P,m_Q$ and $S$ are the usual empirical estimators of $\mu_P,\mu_Q$ and the joint covariance matrix $\Sigma$. % := \tfrac1{2} (\E_P[(x-\mu_P)(x-\mu_P)^T] + \E_Q[(y-\mu_Q)(y-\mu_Q)^T])$.
In a seminal paper, \cite{bs}, showed that in the high-dimensional setting, the T-test performs quite poorly (specifically when $(n,d)\rightarrow \infty$ with $d/n \rightarrow 1-\epsilon$ for small $\epsilon$). This is intuitively because of the difficulty of estimating the $O(d^2)$ parameters of $\Sigma^{-1}$ with very few samples. Indeed, $S^{-1}$ is not even defined when $d>n$ and is poorly conditioned when $d$ is of similar order as $n$. To avoid this problem, they  proposed to use the test statistic
$$
T_{BS} := (m_P - m_Q)^2 - \mathrm{tr}(S)/n
$$
$T_{BS}$ has non-trivial power when $d/n \rightarrow c \in (0,\infty)$. 
\cite{sd} proposed to instead use $\mathrm{diag}(S)$ instead of $S$ in $T_H$, 
%$$
%T_{SD} := (m_P - m_Q)  \mathrm{diag}(S)^{-1} (m_P - m_Q)
%$$
and showed its advantages in certain settings over $T_{BS}$. 
More recently, \cite{cq}, henceforth called CQ, proposed a slight variant of $T_{BS}$, which is a U-statistic of the form
$$
T_{CQ} := \frac1{n(n-1)}\sum_{i\neq j}^n (x_i^T x_j +  y_i^T y_j) -  \frac{2}{n^2}\sum_{i,j=1}^n x_i^T y_j
$$
that achieves the same power without explicit restrictions on $d,n$, but rather in terms of conditions stated in terms of $n,\mathrm{tr}(\Sigma),\mu_P - \mu_Q$.
%, like when the true diagonal of $\Sigma$ has rapidly decaying entries, as opposed to fairly evenly sized entries (one can think of $T_{BS}$ as estimating $\Sigma$ by the identity matrix).
The settings of under which these various statistics are consistent, or achieve non-trivial power, are slightly complicated to describe, and the reader is referred to their papers for details.

\subsection{Related Work (GA)}
There are many nonparametric test statistics for two-sample testing. One of the most popular tests is the kernel Maximum Mean Discrepancy, henceforth called MMD, proposed in \cite{mmd}. While the technical details of the kernel literature are unnecessary for the purposes of this paper, it suffices to say that the population statistic is
$$
\MMD := \max_{\|f\|_H \leq 1} \E_P f(x) - \E_Q f(y)
$$
where $H$ is a Reproducing Kernel Hilbert Space and $\|f\|_H \leq 1$ is its unit norm ball. There are two related sample statistics, both of which can be shown to be unbiased estimators of $\MMD$. The first is a U-statistic
\begin{eqnarray*}
\MMD^2_u &=& \frac1{n(n-1)}\sum_{i\neq j}^n k(x_i, x_j) \\
&+& \frac1{n(n-1)}\sum_{i\neq j}^n k(y_i, y_j)
 -  \frac{2}{n^2}\sum_{i,j=1}^n k(x_i, y_j) 
\end{eqnarray*}

The second is a linear-time statistic
\begin{eqnarray*}
\MMD^2_l &=& \frac1{n/2} \sum_{i=1}^{n/2} [k(x_{2i-1},x_{2i}) + k(y_{2i-1},y_{2i}) \\
&& - k(x_{2i-1},y_{2i}) - k(y_{2i-1},x_{2i}) ]
\end{eqnarray*}

Note that $T_{CQ}$ is just $\MMD^2_u$ under the linear kernel $k(x,y)=x^Ty$. It is known that in the fixed $d$ setting, the power of both $\MMD_l^2$ and $\MMD_u^2$ approaches $1$ at the rate of $\Phi(\sqrt n)$ where $\Phi$ is the standard normal cdf, see \cite{mmd}. However, nothing is formally known when $d$ could be increasing with $n$.

A recent related manuscript by \cite{powermmd} conducts detailed experiments that demonstrate that in the fixed $n$, increasing $d$ setting, the power of MMD and distance correlation decay \textit{polynomially} in high dimensions against fair alternatives. While the authors  provide some initial insights into this phenomenon for specific examples, there is still no  theoretical analysis of the power of MMD (or any statistic designed for GA) against MSA or GA or any other set of alternatives, in the high dimensional setting.

Another statistic called Energy Distance by \cite{energydistance} is closely tied to the MMD - indeed it has the same form as the MMD with the Euclidean distance instead of a kernel; \cite{lyons} showed that one can also use other metrics instead of the Euclidean distance and \cite{hsiceqdcov} showed that there is a close tie between metrics and kernels for these problems. There has been an initial attempt to characterize some properties of distance correlation (which is a related statistic for the related problem of independence testing) in high dimensions in \cite{dcorhigh}, but no analysis of power is available or easily derivable. There also exist many other tests under GA like the cross-match test by \cite{rosenbaum}, but none of them have been analyzed under HD.

%\section{Experimental Insights}
%
%Here we set up a simple simulation. We set draw $d$ samples (hence $n=d$ here) from two Gaussians, whose difference in mean is $\|\mu_P - \mu_Q\|=1$ (by setting each coordinate to $1/\sqrt d$) and common covariance matrix $\Sigma$ is random. We calculate the power of 3 different test statistics - $T_{CQ}$, $\MMD^2_u$ and $T_{SD}$ - at $\alpha=0.05$, by calculating the the number of rejections in 100 independent experiments. We repeat the process for $d=10-1000$.% Figure (FIG) shows that all these tests have essentially the same power.

%\subsection{Related experiments}
%There are two sets of experiments that are particularly revealing. The first are conducted by  \cite{lopes} - while they are interested in the parametric two-sample testing problem when $P,Q$ are Gaussian, they do run experiments that compare many of the aforementioned tests.
%% We reproduce Figure 1(c) from (CITE) here for the sake of illustration where the Gaussians have differing means and  the same covariance, a random $\Sigma$. 
%Specifically, Figure 1 in \cite{lopes} plots the ROC curves for CQ, SD, BS and MMD (as well as others). Surprisingly, all of these tests have overlapping ROC curves in settings with random dense $\Sigma$ as well as diagonal $\Sigma$, though only SD  has (expectedly) higher power when $\Sigma$ is diagonal. This suggests the possibility that even though MMD is a more general test that is consistent against GA, its performance matches many other tests that are designed specifically for MSA.

\section{Power of $\MMD_l$ (fixed dimension)}

%As mentioned earlier, it is known that the power of $\MMD_l$ in fixed dimensions is known to behave like $\Phi(\sqrt n)$ where $\Phi$ is the standard normal cdf. 
Let us first review the basic argument from \cite{mmd} showing the power in the fixed dimensional setting. It will then become clear what the main difficulties are in establishing results in the high-dimensional setting.

The main tool needed is a simple convergence result of the sample statistic to the population quantity. It becomes convenient to introduce the notation $z_i=(x_i,y_i)$ and $h_{ij} = h(z_i,z_j)$ where
\begin{equation}\label{eq:h}
h_{ij} := k(x_i,x_j) + k(y_i, y_j) - k(x_i,y_j) - k(x_j,y_i). \quad
\end{equation}

Then we can rewrite our test statistic as
\begin{equation}\label{eq:MMDl}
\MMD^2_l = \frac1{n/2} \sum_{i=1}^{n/2} h(z_{2i-1},z_{2i}). 
\end{equation}
Its expectation is $E_{z,z'}h(z,z') = \MMD^2$ and then Corollary 16 of \cite{mmd} states that under both $H_0$ and $H_1$, we have
\begin{equation} 
F := \frac{\sqrt n (\MMD^2_l - \MMD^2)}{\sqrt V} \leadsto N(0,1)
\end{equation}
where $V = 2\mathrm{Var}_{z,z'} h(z,z')$ and $\leadsto$ means convergence in distribution as $n \rightarrow \infty$. Note that $V$ is a constant independent of $n$, and so there exists a constant $z_\alpha$ such that $P( Z > z_\alpha ) \leq \alpha$ when $Z \sim N(0,1)$. Then, the corresponding test rejects $H_0$ whenever 
\begin{equation}\label{eq:test}
\mbox{Test-}\MMD^2_l \quad : \quad \frac{\sqrt n \MMD^2_l}{\sqrt {v}} > z_\alpha
\end{equation}
where $v$ is twice the empirical variance of $h(z,z')$. If $\Pr$ denotes the probability under $H_1$, the power of this test is given by
\begin{eqnarray}
&& \Pr\left( \frac{\sqrt n \MMD^2_l}{\sqrt v} > z_\alpha \right) \label{eq:proof1}\\
&=& \Pr \left(F > \sqrt{\frac{v}{V}}z_\alpha - \frac{\sqrt n \MMD^2}{ \sqrt V} \right)\\
&\xrightarrow{n \rightarrow \infty}& \Pr\left( Z > z_\alpha - \frac{\sqrt n \MMD^2}{\sqrt V}  \right) \label{eq:proof2}\\
&=& 1 - \Phi \left( z_\alpha - \frac{\sqrt n \MMD^2}{\sqrt V}  \right)\\
&=& \Phi \left( \frac{\sqrt n \MMD^2}{\sqrt V} - z_\alpha \right) \label{eq:proof3}
\end{eqnarray}
where $\Phi$ is the standard normal cdf. This behaves like $\Phi(\sqrt n)$ since the population $\MMD^2$ and $V$ are constants that are both independent of $n$.

\subsection{The challenges in high dimensions}

There are several significant difficulties in lifting this argument to the high-dimensional setting. %These challenges are outlined quite simply as follows:

\begin{enumerate}
\item[C1.] The population MMD depends on dimension (via the signal strength and bandwidth, as we later show), and one needs to explicitly account for this.
\item[C2.] The variance V also depends on dimension (and the signal strength, and the bandwidth, as we later show), and again one needs to explicitly track this, especially its dependence on dimension.
\item[C3.] In the increasing $d,n$ setting, the limiting distribution is no longer trivially normal, and one needs to establish conditions under which it is indeed normal - the most important question being if the rate of convergence to normality depends on $d$.
\item[C4.] In the increasing $d,n$ setting, one needs to characterize the rate at which $v/V$ still tends to $1$, so that $\sqrt{\frac{v}{V}}z_\alpha$ converges to $z_\alpha$ - since $v,V$ depend on $d$, the key question is again whether the rate of convergence depends on $d$ or not.
\end{enumerate}

We will have to account for each of these challenges explicitly, as we shall see in later sections. Let us first summarize and discuss our assumptions and contributions before we delve into the technical details.

\section{Assumptions and Contributions}

We are now in a position to clearly state our contributions. We focus on analyzing the power of $\MMD_l$ in the high-dimensional setting when $(n,d) \rightarrow \infty$ for the Gaussian kernel with bandwidth $\gamma$, i.e. $k(x,y) = \exp \left(- \frac{\|x-y\|^2}{\gamma^2} \right)$, in the mean-shift setting when $P$ and $Q$ differ in their means. Let us first outline our assumptions below; note that we comment about these assumptions in the next subsection. %In that light, we make the following %simplifying assumptions to make our tedious calculations bearable:

\begin{enumerate}
\item[A1.] $x_i =  U  s_i + \mu_P$ and $y_i =  U t_i + \mu_Q$, where, $s_i,t_i$ are i.i.d random vectors for $i \in \{1,...,n\}$, each having $d$ i.i.d. zero-mean coordinates.%, with mean $\E s = \E t =0$ and covariance $\E[ss^T] = \E[tt^T] = \sigma^2 I$, 
and $U$ corresponds to a $d\times d$ orthogonal rotation i.e. $UU^T = I$.
%The $d$ coordinates of $P,Q$ are independent. Let $P_j,Q_j$ denote the $j$-th marginal, $1 \leq j \leq d$.
\item[A2.] The $k$-th central moments of each (i.i.d.) coordinate of $s,t$ exist for $2 \leq k \leq 6$.
% and are equal for all $j$ and for all $2 \leq k \leq 4$.  
%\item[A3.] $(n,d) \rightarrow \infty$ with $n = \Omega(d)$.
\end{enumerate}
Note that the coordinates of $x,y$ need not be independent and $\E_{x \sim P}[X] = \mu_P, \E_{y \sim Q}[Y]=\mu_Q$.  
Denote $\delta := \mu_P - \mu_Q$. Denote the second, third and fourth central moments of each i.i.d. coordinate of $s,t$ by $\sigma^2, \mu_3, \mu_4$. Remember that $\E  h(z_i,z_j)=\MMD^2$ (see Eq.\eqref{eq:MMDl}). Denote the second, third and fourth central moments of $h(z_i,z_j)$ by $V,\tau_3,\tau_4$. Let $\|.\|$ represent the Euclidean norm. Our main contribution is:\\

\begin{theorem}
For the Gaussian kernel with bandwidth chosen as $\gamma = \Omega(\sqrt d)$, under assumptions A1, A2, with $(n,d) \rightarrow \infty$ at any rate, the Test-$\MMD^2_l$ (Eq. \ref{eq:test}) has asymptotic type-1 error 
$
\alpha 
$
and asymptotic power 
$$
\beta \quad =\quad \Phi\left(\frac{\sqrt{n}~ \|\delta\|^2}{ \sqrt{8d\sigma^4   + 8\sigma^2 \|\delta\|^2  }} - z_\alpha \right) %- \frac{96}{\sqrt n}
$$
where $\Phi$ is the cdf of a standard Normal distribution and $z_\alpha$ is the $(1-\alpha)$ quantile of the standard Normal distribution. For finite samples, type-1 error behaves like  $\alpha + 20/\sqrt n$ and the power like $\beta - 20/\sqrt n$. \\
\end{theorem}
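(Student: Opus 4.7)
The plan is to follow the chain of equalities (6)--(10) from the fixed-$d$ argument but with (i) explicit $d$-dependent expressions for $\MMD^2$ and $V$, (ii) the qualitative CLT replaced by a quantitative Berry--Esseen bound, and (iii) an explicit concentration bound for $v/V$, delivering type-1 error $\alpha + O(1/\sqrt{n})$ and power $\beta - O(1/\sqrt{n})$ uniformly in $d$.

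The first substantive step attacks C1 and C2 simultaneously via a Taylor expansion of the Gaussian kernel. Under A1, orthogonality of $U$ gives $\|x_i-x_j\|^2 = \|s_i-s_j\|^2$, $\|y_i-y_j\|^2 = \|t_i-t_j\|^2$, and $\|x_i-y_j\|^2 = \|s_i-t_j\|^2 + 2(s_i-t_j)^\top U^\top\delta + \|\delta\|^2$; because the bandwidth choice $\gamma = \Omega(\sqrt{d})$ makes each argument of the exponential bounded in expectation, one can expand $\exp(-u/\gamma^2) = 1 - u/\gamma^2 + u^2/(2\gamma^4) - \ldots$ inside each of the four kernel evaluations comprising $h$, and take expectations term by term. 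The zeroth-order pieces of $\E h$ cancel, and at first order the $\|s-s'\|^2$ and $\|t-t'\|^2$ contributions cancel against the symmetric parts of $\|s-t'\|^2$, leaving $\MMD^2 = 2\|\delta\|^2/\gamma^2 + \text{lower order}$. An analogous computation for $\mathrm{Var}(h)$, using the coordinatewise independence in A1 together with the second and fourth moments of the coordinates, should yield $V = (8/\gamma^4)(d\sigma^4 + \sigma^2 \|\delta\|^2) + \text{lower order}$. The $\gamma^2$ factors then cancel in $\sqrt{n}\MMD^2/\sqrt{V}$, reproducing the signal-to-noise ratio $\sqrt{n}\|\delta\|^2/\sqrt{8d\sigma^4 + 8\sigma^2\|\delta\|^2}$ appearing inside $\Phi$. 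A2 is needed throughout to bound the higher-order Taylor terms: moments $\E\|s-s'\|^{2k}$ grow like $d^k$ but are killed by the factor $\gamma^{-2k} \lesssim d^{-k}$, so the expansion really is ordered.

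For C3, since $\MMD^2_l$ is a mean of $n/2$ i.i.d.\ summands $h(z_{2i-1}, z_{2i})$, the one-dimensional Berry--Esseen theorem yields $\sup_t |\Pr(F \leq t) - \Phi(t)| \leq C\tau_3 / (\sqrt{n}\, V^{3/2})$; the same Taylor-expansion machinery applied to $\E(h - \E h)^3$ should give $\tau_3 = O(\gamma^{-6}(d\sigma^4 + \sigma^2\|\delta\|^2)^{3/2})$ (using the sixth-moment part of A2), so that $\tau_3/V^{3/2}$ is bounded independent of $d$, producing the $O(1/\sqrt{n})$ slack in the type-1 error. Addressing C4, I would observe that the Gaussian kernel makes $h$ uniformly bounded, so the empirical estimator $v$ of $V$ is a sample mean of bounded variables whose fluctuations are controlled via Chebyshev/Hoeffding by $1/\sqrt{n}$, and Taylor-expanding $\sqrt{v/V}$ around $1$ gives $|\sqrt{v/V} - 1|\, z_\alpha = O_P(1/\sqrt{n})$ uniformly in $d$.

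Finally I would assemble the pieces into (6)--(10), replacing each ``$\to$'' with an equality plus an explicit $O(1/\sqrt{n})$ slack coming from Berry--Esseen (C3) and the variance concentration (C4), giving the claimed $\alpha + 20/\sqrt{n}$ / $\beta - 20/\sqrt{n}$ finite-sample bounds. I expect the main obstacle to be the bookkeeping in the first step: simultaneously pinning down the exact leading constants in $\MMD^2$, $V$, and $\tau_3$ while showing that all Taylor remainders vanish as $(n,d) \to \infty$, and in particular verifying that the odd-order cross terms involving $\mu_3$ and $U^\top\delta$ contribute only at higher order. This is tedious but mechanical thanks to the strong coordinatewise i.i.d.\ structure provided by A1 and the finite-moment assumption A2.
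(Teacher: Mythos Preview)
Your overall architecture matches the paper's, but two of your steps have real gaps that the paper resolves with a single idea you omit: a direct computation showing the fourth central moment satisfies $\tau_4 \leq (4+o(1))\,V^2$.

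\textbf{Berry--Esseen (C3).} The Berry--Esseen bound needs the \emph{absolute} third moment $\xi_3=\E|h-\E h|^3$, not the signed central moment $\tau_3=\E(h-\E h)^3$. Your plan to ``apply the same Taylor-expansion machinery to $\E(h-\E h)^3$'' does not work for $\xi_3$: the absolute value blocks a termwise expansion, which the paper flags explicitly. Their route is to compute $\tau_4$ by the same bookkeeping you describe, show $\tau_4\leq (4+o(1))V^2$, and then bound $\xi_3\leq \sqrt{\tau_4}\sqrt{V}\leq 2V^{3/2}$ via Cauchy--Schwarz. Only after this does the Berry--Esseen ratio $\xi_3/V^{3/2}$ become dimension-free. (The crude alternative $\xi_3\leq 4V$ from $|h|\leq 4$ leaves an extra $1/\sqrt{V}$, which blows up with $d$.)

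\textbf{Concentration of $v/V$ (C4).} Appealing to boundedness of $h$ and Hoeffding only gives $|v-V|=O_P(1/\sqrt n)$ in \emph{absolute} terms. Since $V\asymp d/\gamma^4$, which is $O(1/d)$ when $\gamma\asymp\sqrt d$, this yields $|v/V-1|=O_P(d/\sqrt n)$, not a dimension-free $O_P(1/\sqrt n)$; your uniformity-in-$d$ claim does not follow. The paper again uses the $\tau_4$ bound: $\mathrm{var}(v)\leq(\tau_4-V^2)/n\leq 3V^2/n$, so Chebyshev gives $|v-V|=O_P(V/\sqrt n)$ and hence $v/V=1+O_P(1/\sqrt n)$ uniformly in $d$. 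In short, boundedness of the kernel is too coarse here; the correct scaling of $\mathrm{var}(v)$ with $V^2$ is what makes the argument dimension-free, and it comes from the same $\tau_4$ calculation needed for C3.

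Minor points: your leading constant for $V$ is off by a factor of $2$ (the paper obtains $V=16(d\sigma^4+\sigma^2\|\delta\|^2)/\gamma^4$), and the $\mu_3$ cross terms do in fact appear in the Term~5 expansion but cancel between the two $k(x,x')k(x,y)$-type contributions, as you suspected.
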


The first remarkable point about this theorem is that the power is independent of bandwidth $\gamma$, as long as $\gamma = \Omega(\sqrt d)$. Such behavior has already been noted (but not explained) in the experiments of \cite{powermmd} and we will verify this carefully in our experiments section. While this may not hold true for other kernels, like the Laplace kernel $k(x,y) = \exp \left(- \frac{\|x-y\|_1}{\gamma} \right)$, or against more general alternatives, it is both surprising and interesting that this is the case for the Gaussian kernel under MSA. As discussed later, this theorem applies to the bandwidth chosen by the so-called \textit{median heuristic}; see \cite{learningkernels}. It implies that the median heuristic provides an arguably safe choice in the light of having no further information, and also why it works reasonably well in practice/simulations.

If we consider the \textit{signal to noise ratio} (henceforth called SNR) to be defined as $\Psi := \|\delta\|/\sigma$, then  focusing on the more important first term, the power behaves like
$$
\Phi \left(\frac{\sqrt{n}~ \Psi^2 }{ \sqrt{8d   + 8\Psi^2  }} -z_\alpha \right).
$$

From this, we get the following two corollaries. The first applies to the small SNR regime (which includes the \textit{fair} alternative setting, see \cite{powermmd} for details), and the second applies when SNR is large.

\begin{corollary}
When the signal to noise ratio $\Psi$ is small, specifically $\Psi = o(d^{1/2})$, the power goes to 1 at the rate of $\Phi( \sqrt{n}\Psi^2/\sqrt{d}) $. \\
\end{corollary}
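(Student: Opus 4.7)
The plan is to reduce the general power formula from the Theorem to its dominant asymptotic behavior under the assumption $\Psi = o(d^{1/2})$. The calculation is essentially algebraic, with no delicate probabilistic step required beyond what the Theorem has already supplied.

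First I would factor a $\sigma^2$ out of the square root in the denominator of the Theorem's expression. Since $8d\sigma^4 + 8\sigma^2\|\delta\|^2 = 8\sigma^4 (d + \|\delta\|^2/\sigma^2) = 8\sigma^4(d+\Psi^2)$, the argument of $\Phi$ becomes
$$\frac{\sqrt{n}\,\|\delta\|^2}{\sqrt{8d\sigma^4 + 8\sigma^2\|\delta\|^2}} - z_\alpha \;=\; \frac{\sqrt{n}\,\Psi^2}{\sqrt{8(d+\Psi^2)}} - z_\alpha,$$
which is exactly the intermediate SNR-form expression displayed just before the Corollary. Then I would exploit $\Psi = o(d^{1/2})$, which is the same as $\Psi^2/d \to 0$, so that $8(d+\Psi^2) = 8d(1+o(1))$ and hence $\sqrt{8(d+\Psi^2)} = \sqrt{8d}\,(1+o(1))$. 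The argument of $\Phi$ therefore equals
$$\frac{\sqrt{n}\,\Psi^2}{\sqrt{8d}}\bigl(1+o(1)\bigr) - z_\alpha.$$
Since $z_\alpha$ is a fixed constant (depending only on the pre-specified level $\alpha$, not on $n$ or $d$), it is asymptotically dominated as soon as $\sqrt{n}\,\Psi^2/\sqrt{d}\to\infty$, and the claimed rate $\Phi(\sqrt{n}\,\Psi^2/\sqrt{d})$ follows, with the constant $\sqrt{8}$ absorbed into the $\Theta(\cdot)$-style rate statement.

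The only real ``obstacle'' here is interpretive rather than mathematical: one has to pin down what it means for the power to tend to $1$ \emph{at the rate} $\Phi(\sqrt{n}\,\Psi^2/\sqrt d)$. Because $\Phi$ is monotone and bounded in $[0,1]$, the substantive claim is that the pre-image, i.e.\ the argument fed into $\Phi$, scales as $(1+o(1))\cdot \sqrt{n}\,\Psi^2/\sqrt{8d}$; all the asymptotics then propagate through $\Phi$ automatically. No additional limit theorem, concentration bound, or analysis of $v/V$ is required, since the Theorem has already handled challenges C1--C4.
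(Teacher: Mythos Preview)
Your proposal is correct and matches the paper's approach exactly: the paper gives no standalone proof of this Corollary, treating it as immediate from the SNR-form expression $\Phi\!\left(\frac{\sqrt{n}\,\Psi^2}{\sqrt{8d+8\Psi^2}}-z_\alpha\right)$ displayed just before the statement. Your algebraic reduction (factoring out $\sigma^2$, then using $\Psi^2=o(d)$ to simplify the denominator) is precisely the implicit step the paper is relying on, and your interpretive remark about what ``rate $\Phi(\sqrt{n}\,\Psi^2/\sqrt d)$'' means is a helpful clarification the paper omits.
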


\begin{corollary}
When the signal to noise ratio $\Psi$ is large, specifically $\Psi = \omega(d^{1/2})$, then the power goes to 1 at the rate of $\Phi(\sqrt n \Psi)$, independent of $d$.
\end{corollary}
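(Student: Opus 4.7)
The plan is to directly specialize the power formula from Theorem~1 to the regime $\Psi = \omega(d^{1/2})$, so essentially all the heavy lifting is already done and only an asymptotic simplification of the argument of $\Phi$ remains. Starting from the form already noted just before the corollary,
\[
\beta \;=\; \Phi\!\left(\frac{\sqrt{n}\,\Psi^{2}}{\sqrt{8d+8\Psi^{2}}} - z_\alpha\right),
\]
I would factor the denominator as $\sqrt{8\Psi^{2}(1+d/\Psi^{2})} = 2\sqrt{2}\,\Psi\,\sqrt{1+d/\Psi^{2}}$ and then use the assumption $\Psi = \omega(d^{1/2})$, which is exactly the statement $d/\Psi^{2} \to 0$, to conclude $\sqrt{1+d/\Psi^{2}} \to 1$. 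Thus the argument of $\Phi$ is asymptotically $\sqrt{n}\,\Psi/(2\sqrt{2}) - z_\alpha$, which is $\Theta(\sqrt{n}\,\Psi)$ and independent of $d$ to leading order, giving the claimed rate $\Phi(\sqrt{n}\,\Psi)$.

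Next I would verify that this argument tends to $+\infty$, so that $\beta \to 1$. Since $n \to \infty$ by the HD assumption and $\Psi$ is at least $\omega(d^{1/2}) \to \infty$ (because $d \to \infty$), the quantity $\sqrt{n}\,\Psi/(2\sqrt{2})$ diverges and dominates the fixed constant $z_\alpha$. Monotonicity and continuity of $\Phi$ then yield $\beta \to 1$, and more precisely the convergence rate matches $\Phi(\sqrt{n}\,\Psi)$ in the sense stated.

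I would also briefly address the finite-sample correction. Theorem~1 states a $20/\sqrt{n}$ correction to the asymptotic power, so the same correction carries over verbatim here; the corollary is purely about the leading-order scaling inside the $\Phi$ and does not affect the Berry--Esseen-type remainder. Finally, I would note for completeness that the bandwidth condition $\gamma = \Omega(\sqrt{d})$ from Theorem~1 is still in force, since the corollary does not relax any hypothesis of the theorem.

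The only genuine step of substance is the asymptotic simplification $\sqrt{8d+8\Psi^{2}} \sim 2\sqrt{2}\,\Psi$ when $d = o(\Psi^{2})$, and no step here is truly an obstacle; the mild care needed is simply to confirm that the $d$-dependent term inside the square root is negligible in precisely the regime the corollary defines, which is immediate from the definition $\Psi = \omega(d^{1/2})$.
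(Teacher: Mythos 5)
Your proposal is correct and follows exactly the route the paper intends: the corollary is an immediate specialization of the power expression $\Phi\bigl(\sqrt{n}\,\Psi^{2}/\sqrt{8d+8\Psi^{2}} - z_\alpha\bigr)$ from Theorem~1, where $\Psi=\omega(d^{1/2})$ makes the $8d$ term negligible so the argument reduces to $\Theta(\sqrt{n}\,\Psi)$. No gaps; the asymptotic simplification and the divergence check are exactly what is needed.
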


Note that the switch in behavior between the two corollaries occurs at $\Psi$ being on the order of $d^{1/2}$, and at this point the prediction of the two corollaries match - hence one could use $O, \Omega$ instead of $o, \omega$ for describing growth of $\Psi$ in both corollaries.

%Corollaries 1 and 2 are quite remarkable, because they indicate that the MMD achieves what can be termed as \textit{parametric rates}  against parametric alternatives. Even though it is designed to be consistent against all (general) alternatives, it loses very little when faced with a parametric alternative. These results are only slightly weaker than the kind of results obtained by other tests specifically designed for MSA like CQ, BS, and SD, as we shall discuss, and also validates the observations made in the both sets of experiments mentioned in Section 1.5. 

\subsection{Remarks about assumptions}

Assumptions (A1,A2) are general enough for the predictions made by our theorem to be accurate and representative of observed behavior. We will verify the predictions of the theorem, corollaries (and later lemmas) in our simulations.
 %However, it is possible that they can be weakened since our experience with simulations indicate that the results seem to  hold true under weaker assumptions.
\begin{enumerate}
\item[A1.] While the coordinates of $x,y$ need not be independent, the first assumption does restrict their covariances to be $\sigma^2 I$. We note that \cite{dcorhigh} makes a more restrictive assumption of independent coordinates, while Assumption (a) in \cite{bs} and Eq.(3.1) in \cite{cq} assume the same model as we do but don't require spherical covariance. However, our assumption is truly only for mathematical convenience; if we instead had $UD^{1/2}$ in A1, where $D$ is a diagonal rescaling, all our calculations can still be carried out, but would be more tedious since the coordinates of $D^{1/2}s$ are still independent but \textit{not} identically distributed, and we would need to track $\sigma^2_j, \mu_{3j},\mu_{4j}$ in Appendix Sections 3-6.
%The reason we assume (A1) is purely for mathematical convenience - so that a multidimensional integral simplifies to the product of unidimensional integrals that are easier to handle allowing us to derive closed form expressions for $\MMD^2$ and its variance.  Note that (A1) allows $\delta_i \neq 0$ to possibly be different for each $i$. We note that this assumption was also made in  \cite{dcorhigh} and a more general assumption with a similar spirit was made in \cite{cq} (see Eq. (3.4)). This is the strongest of our assumptions, but while it would be good to get rid of this assumption from a theoretical standpoint, we do not believe that it qualitatively changes the results of this paper and the predicted behavior seems to be accurate even in situations when (A1) is not satisfied. %For example, the experiments in Figure 1 of \cite{lopes} were for diagonal $\Sigma$ as well as random, dense $\Sigma$.
\item[A2.] %Under (A1) and (A2), $P$ and $Q$ have covariance matrices $\Sigma_P = \Sigma_Q = \sigma^2 I$. 
 The existence of third and fourth moments is needed for calculating population MMD and variance terms, as well as for the Berry-Esseen lemma to control the deviation from normality, and the convergence of $v$ to $V$. The existence of the sixth moment is needed to bound the Taylor expansion residual term in all our calculations. Note that CQ needs the existence of eighth moments, and BS assume the existence of fourth moments (see Eq. (3.2) in \cite{cq}) and Assumption (a) in \cite{bs}.

%\item[A3.] Assumption (A3) serves the purpose of allowing us to prove Berry-Esseen bounds that clearly characterize the asymptotic normality (and the finite sample deviation from normality) of the null and alternate distributions of our test statistic $\MMD_l$ in the HD regime of increasing $n,d$. Our experiments, as well as those of \cite{powermmd}, suggest that the qualitative conclusions of Corollary 1 still do hold without (A3).%, but it is possible that the aforementioned asymptotic normality may not continue to hold, making it harder to get a handle on the explicit expressions for power.

\end{enumerate}

%Assuming that $P,Q$ have the same covariance structure really captures the spirit of our parametric setting MSA, by isolating the signal to just depend on the difference in means. It will not be hard to see in later calculations that if the distributions also differed in their covariances, it would only make it easier for the MMD test to reject the null.

\subsection{Remark about bandwidth choice}

Remember that the power is independent of the bandwidth $\gamma$, as long as $\gamma = \Omega (\sqrt d)$. This restriction of $\gamma = \Omega(\sqrt d)$ is to allow us to control the residual term in the Taylor expansion of the Gaussian kernel. However, it is not very restrictive, since smaller $\gamma$ typically leads to  worse power. Specifically, we note that the experiments in \cite{powermmd} for mean-shift alternatives show  convincingly that when $\gamma$ is chosen to be a constant or $d^\alpha$ for $\alpha < 0.5$ (including constant $\gamma$), then the power of MMD is  poor, while when the highest power occurs for values $\alpha \geq 0.5$. Hence our choice covers most reasonable choices of bandwidth. Furthermore, one of the most popular methods for bandwidth selection is called the \textit{median heuristic}, see \cite{learningkernels}, where one chooses the bandwidth as the median of  distances between all pairs of points. A simple calculation shows $\E_{x \sim P, y \sim Q}\|x - y\|^2 = 2\sigma^2d + \|\mu_P - \mu_Q\|^2$, so generally speaking the median heuristic chooses $\gamma$ of the same order as $\sigma \sqrt {2d}$ (or larger if $\|\mu_P - \mu_Q\|$ is large). %Another heuristic seen in the literature is the \textit{max heuristic}, where the maximum of pairwise distances between all points is chosen.  

 %the Hilbert space norm of the difference between the mean-embeddings of the two probability distributions in a Reproducing Kernel Hilbert Space. Given a kernel $k$, there are two related test 

%\newpage
%\subsection{The Kernel Maximum Mean Discrepancy}

\subsection{Comparisons to CQ}

The assumptions in CQ, BS, SD are slightly differently stated from our results here. However, their results can broadly be compared to ours. We can summarize the most recent results, those of CQ, under (A1) and (A2) in the following two observations.  

%Let us first briefly describe what we believe is an important mistake in \cite{cq} - all notations, equation numbers and theorems in this paragraph refer to those in \cite{cq}. Using the test statistic $T_n/\hat \sigma_{n1}$ defined below Theorem 2, we can derive the power under assumption (3.5) as
%
%\begin{eqnarray*}
%&&P_1 \left( \frac{T_n}{\hat \sigma_{n1}} > \xi_\alpha \right) =\\ 
%&=&P_1 \left( \frac{T_n - \|\mu_1 - \mu_2\|^2}{\hat \sigma_{n2}} > \frac{\hat \sigma_{n1}}{\hat \sigma_{n2}} \xi_\alpha -\frac{ \|\mu_1 - \mu_2\|^2}{\hat \sigma_{n2}} \right)\\
%& \rightarrow& \Phi \left( \frac{ \|\mu_1 - \mu_2\|^2}{\hat \sigma_{n2}} \right) \mbox{ (the denominator is \textit{not} $\hat \sigma_{n1}$)}\\
% &=&\Phi \left( \frac{\sqrt{n} \|\mu_1-\mu_2\|^2}{\sqrt{(\mu_1 - \mu_2)^T \Sigma (\mu_1 - \mu_2)}} \right) 
% \end{eqnarray*}
%
%which should be the expression for power that they derive in Eq.(3.12), the most important differnce being the presence of $\sqrt n$ instead of $n$ in the numerator. They also do not have an explicit Berry-Esseen bound dealing with the deviation from normality. We can summarize the performance of CQ under (A1),(A2) in the following two corollaries, to contrast it with Corollaries 1,2.\\

The first observation follows from Eq. (3.11) in \cite{cq} which applies to the small SNR regime dictated by Eq. (3.4).

\begin{observation}
When the signal to noise ratio $\Psi$ is small, specifically $\Psi = o(\sqrt{d/n})$, the power goes to 1 at the rate of $ \Phi(n \Psi^2/\sqrt{d}) $. %Hence the test has non-trivial power when $n = O(\sqrt d)$ and is consistent if $n = \omega(\sqrt d)$.
\end{observation}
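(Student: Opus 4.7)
The plan is to invoke Chen and Qin's main asymptotic power formula (their Eq.~(3.11)) and simply specialize it to the isotropic setting forced by (A1). CQ prove that, under their regularity assumptions, the U-statistic $T_{CQ}$ is asymptotically normal under $H_1$ with power of the form
\[
\Phi\!\left( \frac{n\|\delta\|^2}{\sqrt{2\,\mathrm{tr}(\Sigma^2)}} - z_\alpha \right),
\]
valid in the ``small-signal'' regime of their Eq.~(3.4) in which $\delta^\top \Sigma\,\delta / n$ is dominated by $\mathrm{tr}(\Sigma^2)/n^2$. The proof therefore reduces to (i) verifying that their hypotheses hold in our model and (ii) substituting the covariance structure implied by (A1).

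For step (i), the orthogonality $UU^\top = I$ and the i.i.d.\ zero-mean coordinates of $s,t$ give $\Sigma = \sigma^2 I$, hence $\mathrm{tr}(\Sigma) = d\sigma^2$ and $\mathrm{tr}(\Sigma^2) = d\sigma^4$. All of CQ's trace-ratio conditions (e.g.\ $\mathrm{tr}(\Sigma^4)/\mathrm{tr}^2(\Sigma^2) = 1/d$) then vanish automatically as $d\to\infty$, so those assumptions collapse to requirements on coordinate moments of order at most four, which are covered by (A2). CQ's structural data assumption (3.1) is exactly our (A1), modulo that they allow a non-spherical $\Sigma$. Finally, their small-signal condition $\delta^\top \Sigma \delta = o(\mathrm{tr}(\Sigma^2)/n)$ specializes to $\sigma^2 \|\delta\|^2 = o(\sigma^4 d / n)$, i.e.\ $\Psi^2 = o(d/n)$, which is precisely the $\Psi = o(\sqrt{d/n})$ hypothesis of the observation.

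For step (ii), the substitution is a one-liner: replacing $\mathrm{tr}(\Sigma^2)$ by $d\sigma^4$ in the CQ power expression yields
\[
\Phi\!\left( \frac{n\|\delta\|^2}{\sigma^2\sqrt{2d}} - z_\alpha \right) \;=\; \Phi\!\left( \frac{n\,\Psi^2}{\sqrt{2d}} - z_\alpha \right),
\]
which, after absorbing the constant $\sqrt{2}$ and the fixed shift $z_\alpha$, is exactly the claimed rate $\Phi(n\Psi^2/\sqrt{d})$.

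The substantive analytic work lives upstream in CQ itself, so the only thing within the scope of this observation where one could genuinely slip up is the bookkeeping that maps CQ's trace-based conditions (stated for general $\Sigma$ and phrased in terms of expressions of trace order up to eight) onto the six-moment hypothesis (A2) in the spherical case. That reduction is routine but is the main obstacle in this proof: every $\mathrm{tr}(\Sigma^k)$ collapses to $d\sigma^{2k}$, and consequently CQ's high-order moment requirements on the data vector wash out to conditions on moments of the scalar coordinates of $s,t$ that are comfortably implied by (A2).
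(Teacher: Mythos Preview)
Your proposal is correct and matches the paper's own treatment: the paper simply states that the observation follows from Eq.~(3.11) of \cite{cq} in the small-SNR regime of their Eq.~(3.4), and you have accurately spelled out the specialization $\Sigma=\sigma^2 I$, $\mathrm{tr}(\Sigma^2)=d\sigma^4$, and $\delta^\top\Sigma\delta=o(\mathrm{tr}(\Sigma^2)/n)\Leftrightarrow\Psi=o(\sqrt{d/n})$. One minor caveat: the paper itself notes that CQ's framework technically requires eighth moments while (A2) only supplies sixth, so your claim that (A2) ``comfortably'' covers CQ's moment hypotheses slightly overstates things; the paper finesses this by saying the assumptions are ``slightly differently stated'' but ``broadly comparable,'' and you should adopt the same hedge rather than assert full containment.
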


We believe there is a mistake in the derivation of Eq. (3.12) in \cite{cq} which applies in the small SNR regime dictated by Eq. (3.5). We describe this in more detail in the Appendix Section 1, and just summarize the corrected resulting observation below.

\begin{observation}
When the signal to noise ratio $\Psi$ is large, specifically $\Psi = \omega(\sqrt{d/n})$, then the power goes to 1 at the rate of $\Phi (\sqrt n \Psi)$, independent of $d$.
\end{observation}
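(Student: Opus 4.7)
The plan is to adapt the derivation that produces Theorem~1 to the present setting, where the statistic is $T_{CQ}$ rather than $\MMD^2_l$. The crucial observation (already noted in the excerpt) is that $T_{CQ}$ is exactly $\MMD^2_u$ with the linear kernel $k(x,y)=x^T y$, so one can reuse the structural template of the main argument: compute the population $\E T_{CQ}$, compute $\mathrm{Var}\,T_{CQ}$, check asymptotic normality, and then substitute into a power expression analogous to \eqref{eq:proof1}--\eqref{eq:proof3}. Two adjustments compared with Theorem~1 are required: the statistic is a U-statistic rather than a linear-time average, and the kernel is linear rather than Gaussian (which actually simplifies things by removing the need to bandwidth-control the Taylor remainder of $\exp(-\|x-y\|^2/\gamma^2)$).

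First, substituting A1 into the kernel summand $h_{ij}=x_i^T x_j + y_i^T y_j - x_i^T y_j - x_j^T y_i$ and using $U^T U=I$ together with $\E s_i=\E t_i=0$ immediately gives $\E h_{ij}=\|\delta\|^2$, so $\E T_{CQ}=\|\delta\|^2$ under $H_1$ (and $0$ under $H_0$). Next, a Hoeffding decomposition of the U-statistic produces two leading variance contributions: a null-like term of order $\mathrm{tr}(\Sigma^2)/n^2$ from the covariance structure, and a mean-shift term of order $\delta^T\Sigma\delta/n$ driven by the signal. Under A1, $\Sigma=\sigma^2 I$, so these reduce to $d\sigma^4/n^2$ and $\sigma^2\|\delta\|^2/n$ respectively, and carrying constants one obtains
$$
\mathrm{Var}\,T_{CQ}\;=\;\frac{2d\sigma^4}{n^2}\;+\;\frac{4\sigma^2\|\delta\|^2}{n}\;+\;o(\cdot),
$$
the higher-order terms being controlled by the moment bounds of A2. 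Asymptotic normality of $T_{CQ}$ under A2 is standard for non-degenerate U-statistics; a Berry--Esseen bound in the spirit of the one invoked for challenge C3 of the main argument suffices.

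The test rejects when $T_{CQ}/\hat\sigma_0>z_\alpha$, where $\hat\sigma_0\to\sigma_0:=\sigma^2\sqrt{2d}/n$ is a plug-in estimator of the null standard deviation; its consistency under $H_1$ at the rate needed is routine given A2. The power is then
$$
\Pr(T_{CQ}>z_\alpha\sigma_0)\;\approx\;\Phi\!\left(\frac{\|\delta\|^2-z_\alpha\sigma_0}{\sqrt{\mathrm{Var}\,T_{CQ}}}\right).
$$
In the large-SNR regime $\Psi=\omega(\sqrt{d/n})$, the inequality $\sigma^2\|\delta\|^2/n\gg d\sigma^4/n^2$ makes the mean-shift variance term dominate, so $\sqrt{\mathrm{Var}\,T_{CQ}}\sim 2\sigma\|\delta\|/\sqrt n$ and the expression simplifies to $\Phi\bigl(\sqrt n\,\|\delta\|/(2\sigma)\bigr)=\Phi(\sqrt n\,\Psi/2)$, which is the claimed rate $\Phi(\sqrt n\,\Psi)$ and exhibits no $d$-dependence.

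The main obstacle is the bookkeeping that the excerpt flags as the source of CQ's error: one must use the $H_1$ variance, not the null variance $\sigma_0^2$, in the denominator of the power expression. In the small-SNR regime the two are asymptotically equal and the mistake is invisible, but in the large-SNR regime the alternative variance $4\sigma^2\|\delta\|^2/n$ swamps $\sigma_0^2\asymp d\sigma^4/n^2$, and plugging in $\sigma_0$ in place of $\sqrt{\mathrm{Var}\,T_{CQ}}$ would produce a spurious $\Phi(n\Psi^2/\sqrt d)$ rate (that is, it would falsely extend Observation~1 into the large-SNR regime) instead of the correct $\Phi(\sqrt n\,\Psi)$. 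Once this is handled, the remaining computation is routine algebra.
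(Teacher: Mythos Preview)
Your proposal is correct and lands on precisely the same key point the paper does: in the large-SNR regime one must standardize by the $H_1$ variance $\asymp \sigma^2\|\delta\|^2/n$ rather than the null variance $\asymp d\sigma^4/n^2$, and doing so yields $\Phi(\sqrt n\,\Psi)$ with no residual $d$-dependence. Your diagnosis of why the wrong denominator spuriously produces $\Phi(n\Psi^2/\sqrt d)$ is exactly the correction the paper makes.

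The route differs. The paper's argument is a two-line fix of CQ's own derivation: it takes CQ's Theorem~2 (asymptotic normality of $(T_n-\|\delta\|^2)/\hat\sigma_{n2}$ under $H_1$ with $\hat\sigma_{n2}^2\asymp (\mu_P-\mu_Q)^T\Sigma(\mu_P-\mu_Q)/n$) as given, and simply observes that in the recentering step the ratio $\hat\sigma_{n1}/\hat\sigma_{n2}\to 0$ under the high-SNR condition, so the threshold term vanishes and the denominator that survives is $\hat\sigma_{n2}$, not $\hat\sigma_{n1}$. You instead recompute $\E T_{CQ}$ and $\mathrm{Var}\,T_{CQ}$ from scratch via a Hoeffding decomposition under A1--A2 and then invoke a U-statistic CLT. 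Your version is more self-contained at the moment-calculation level and makes the two competing variance contributions explicit; the paper's version is shorter because it outsources all of that to CQ. One caveat: the step you label ``standard for non-degenerate U-statistics'' is not automatic when $(n,d)\to\infty$ jointly --- the classical U-statistic CLT is a fixed-distribution result, and a triangular-array version is what CQ's Theorem~2 supplies. The paper sidesteps this by citing CQ directly, so this is not a gap unique to your argument, but it is the one place where ``a Berry--Esseen bound in the spirit of C3'' understates the work required (C3 handles an i.i.d.\ average, not a two-sample U-statistic in growing dimension).
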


Comparing these expressions with Corollary 1 and 2, it is clear that CQ has an advantage over $\MMD_l$ in the low-SNR setting. For example, when $n=d$ and the SNR  $\Psi$ is constant, the power of CQ can increase $\sqrt n$ times faster than that of $\MMD_l$ but when the SNR is $\omega(d^{1/2})$, the power of both methods scales in the same fashion. This advantage for low SNR might be wiped out by considering $\MMD^2_u$ - ascertaining if this is the case is an important direction of future work. The main technical challenge is understanding the limiting distributions of general degenerate U-statistics in high dimensions (which in fixed dimensional setting is an infinite sum of $\chi^2$s; see \cite{serfling}, Section 5.5.2).

We now provide the proof of Theorem 1 and then verify all our claims in simulations, to convincingly show that these expressions are tight up to constant factors.

%Since this paper's main aim is really to provide, for the first time, an analysis of power of a nonparametric test in high-dimensions, and a demonstration of how it behaves against MSA (though it is consistent against GA), we will now not delve into comparisions further, but concentrate on proving and verifying our claims.

\section{Proof of Theorem 1}

We split the proof into four subsections, one for each of the challenges (C1)-(C4). For C1 and C2, we need to calculate the first two moments of $h$, introduced in Eq.\eqref{eq:h}, for which the main tool we use is Taylor expansions (whose validity is explained in Appendix Section 2), following which the results follow after a sequence of tedious calculations and detailed book-keeping. For C3 and C4, we need to bound the third and fourth moments of $h$. The main tool used for C3 is a Berry-Esseen theorem which helps us track the deviation from normality at finite samples, and C4 is tackled by Chebyshev's inequality once we have a handle on the variance of $v$.  Most of the details will be deferred to the Appendix, but we will outline the main steps of the derivations here.

\subsection{The Population $\MMD$}

The main takeaway point of the following lemma is the dependence of population $\MMD^2$ on the bandwidth $\gamma$ and the signal strength $\|\delta\|$ (recall $\delta := \mu_P - \mu_Q$). If $p,q$ are the pdfs of $P,Q$, then note that the population $\MMD^2$ with the Gaussian kernel is given by 
\begin{eqnarray*}
\int_{\R^d} e^{-\frac{\|x-y\|^2}{\gamma^2}} (p(x) p(y) + q(x) q(y) - 2 p(x)q(y)) dx dy 
%&-& 2\int_{\R^d} e^{-\frac{\|x-y\|^2}{\gamma^2}} p(x) q(y) dx dy
\end{eqnarray*}

%If $p,q$ are the pdfs of $P,Q$ then we refer to their convolutions with the Gaussian kernel of bandwidth $\gamma$ as 
%$$
%p_\gamma(x) := \int_{u} e^{-\frac{\|x-u\|^2}{\gamma^2}} p(u) du
%$$ 
%and $q_\gamma$ is defined analogously. 

\begin{lemma}
Under (A1),(A2), and when $\gamma = \Omega(\sqrt d)$ we have
$$
\MMD^2 = \frac{2\|\delta\|^2}{\gamma^2} (1 + o(1)).
$$
\end{lemma}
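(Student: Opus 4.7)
The plan is to reduce $\MMD^2$ to a coordinate-wise product by using the orthogonality of $U$ in (A1) and the independence of the coordinates of $s,t$. Orthogonality gives $\|x-x'\|^2 = \|s-s'\|^2$ and $\|y-y'\|^2 = \|t-t'\|^2$, and writing $w := U^T\delta$ (so $\|w\|=\|\delta\|$) then completing the square coordinate-by-coordinate yields
$$
\|x-y\|^2 = \sum_{i=1}^d (\xi_i + w_i)^2, \qquad \xi_i := s_i - t_i.
$$
Since the coordinates of $s,t$ are independent, each of the three kernel expectations factors across $i$, giving the clean identity
$$
\MMD^2 \;=\; 2\Bigl[\beta^d - \prod_{i=1}^d g(w_i)\Bigr], \quad \beta := \E[e^{-\xi^2/\gamma^2}], \quad g(w) := \E[e^{-(\xi+w)^2/\gamma^2}],
$$
reducing the problem to understanding how $g(w)$ deviates from $g(0)=\beta$.

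The next step is to Taylor-expand $g$ around $w=0$. Because $s_1$ and $t_1$ are i.i.d., $\xi$ is symmetric and hence $g$ is an even function, so all odd derivatives at $0$ vanish. Differentiating under the expectation gives
$$
g''(0) \;=\; -\frac{2\beta}{\gamma^2} + \frac{4\,\E[\xi^2 e^{-\xi^2/\gamma^2}]}{\gamma^4} \;=\; -\frac{2\beta}{\gamma^2}(1+o(1)),
$$
using $\gamma = \Omega(\sqrt d) \to \infty$ and the second moment from (A2). Taylor to third order with Lagrange remainder (bounded via the sixth moment from (A2)) yields $g(w)/\beta = 1 - w^2/\gamma^2(1+o(1)) + O(w^4/\gamma^4)$ uniformly. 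Summing $\log(g(w_i)/\beta)$ across coordinates and using $\log(1+x) = x + O(x^2)$,
$$
\sum_{i=1}^d \log\bigl(g(w_i)/\beta\bigr) \;=\; -\frac{\|\delta\|^2}{\gamma^2}(1+o(1)) + O\!\Bigl(\frac{\|\delta\|_4^4}{\gamma^4}\Bigr),
$$
and the bound $\|\delta\|_4^4 \le \|\delta\|^4$ keeps the second term $o(\|\delta\|^2/\gamma^2)$ in the operative regime $\|\delta\|^2 = o(\gamma^2)$ that is relevant to the main theorem.

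Exponentiating and using $1-e^{-z}=z(1+o(1))$ as $z\to 0$ then produces
$$
\MMD^2 = 2\beta^d\Bigl[1 - \exp\!\bigl(-\tfrac{\|\delta\|^2}{\gamma^2}(1+o(1))\bigr)\Bigr] = \frac{2\|\delta\|^2}{\gamma^2}(1+o(1)),
$$
after noting that the $\beta^d$ prefactor is itself $1+o(1)$ when $\gamma$ grows fast enough with $d$. The main technical obstacle will be controlling the Taylor remainder of $g$ uniformly across all $d$ coordinates simultaneously: per-coordinate the Lagrange remainder is $O(w_i^4/\gamma^4)$ (this is the only place the sixth moment in (A2) is used), and summing across coordinates produces an aggregate error of $O(\|\delta\|^4/\gamma^4)$, which is $o(\|\delta\|^2/\gamma^2)$ precisely when $\gamma = \Omega(\sqrt d)$ is combined with the low-signal regime $\|\delta\|^2 = o(\gamma^2)$. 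The coordinate-wise factorization is what makes this bookkeeping tractable; without it, one would have to work with the full high-dimensional joint distribution of $\|x-y\|^2$.
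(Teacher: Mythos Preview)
Your approach is correct and closely parallels the paper's: both reduce via the orthogonality of $U$ to independent coordinates, factor the three kernel expectations as products over $i$, and then Taylor-expand. The organizational difference is that the paper Taylor-expands the kernel $e^{-(u-v)^2/\gamma^2}$ in its argument and then takes moments, arriving at products such as $\prod_i(1-2\sigma^2/\gamma^2-\delta_i^2/\gamma^2)$ which are subsequently linearized; you instead keep the per-coordinate expectation $g(w)=\E[e^{-(\xi+w)^2/\gamma^2}]$ exact and Taylor-expand in the mean-shift $w$. Your route is a touch cleaner because the common factor $\beta^d$ is isolated from the start and the evenness of $g$ (from the symmetry of $\xi=s_1-t_1$) kills odd terms without computation.

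Two small remarks. First, bounding the Lagrange remainder $g^{(4)}(\theta w)\,w^4/24$ only needs fourth moments of $\xi$, not sixth; the sixth-moment hypothesis in (A2) is invoked by the paper for the higher-order expansions in Lemmas~2--4, not here. Second, your final step $\beta^d=1+o(1)$ actually requires $\gamma=\omega(\sqrt d)$: for $\gamma=c\sqrt d$ one has $\beta=1-2\sigma^2/(c^2 d)+O(1/d^2)$ and hence $\beta^d\to e^{-2\sigma^2/c^2}\in(0,1)$. The paper's linearization $\prod_i(1-a_i)\approx 1-\sum_i a_i$ has precisely the same issue, so this is a looseness in the lemma's $(1+o(1))$ rather than a flaw unique to your argument; for the downstream theorem only the order $\|\delta\|^2/\gamma^2$ matters, and that is unaffected.
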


\begin{proof}
We defer details to the Appendix Section 3. 
On using Taylor's expansion for the Gaussian kernel, the terms in the aforementioned $\MMD^2$ expression can be approximated by bounding higher order residual terms.  
We prove that the first $\MMD^2$ term is
\begin{eqnarray*}
\int_{\R^d} e^{-\frac{\|x-y\|^2}{\gamma^2}} p(x) p(y) dx dy =  \left( 1 - \frac{2\sigma^2}{\gamma^2}\right)^d.\\
\end{eqnarray*}
Using similar techniques we can also deduce:
\begin{eqnarray*}
\int_{\R^d} e^{-\frac{\|x-y\|^2}{\gamma^2}} p(x) q(y) dx dy = \prod_i \left( 1 - \frac{2\sigma^2}{\gamma^2} - \frac{\delta_i^2}{\gamma^2} \right).
\end{eqnarray*}
Combining these, again using Taylor expansions, gives us our expression.
\end{proof}

\subsection{The Variance}
As argued earlier, the variance is given by $2V/n$ where $V = \mathrm{Var}_{z,z'}h(z,z')$. The takeaway points of the following lemma are the identical dependence that $\sqrt V$ has  on bandwidth $\gamma$ as the $\MMD^2$ (which then causes their ratio to be essentially independent of $\gamma$), and also the role played by dimension and the signal strength in determining the variance. 

\begin{lemma}
Under (A1),(A2), and when $\gamma=\Omega(\sqrt d)$, we have
$$
V = \frac{16d\sigma^4 + 16 \sigma^2 \|\delta\|^2 }{\gamma^4} (1 + o(1)).
$$
\end{lemma}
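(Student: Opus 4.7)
The plan is to evaluate $V = \E\bigl[h(z,z')^2\bigr] - (\MMD^2)^2$ to leading order in $1/\gamma^4$. By Lemma 1 we have $(\MMD^2)^2 = 4\|\delta\|^4/\gamma^4(1+o(1))$, which is already of order $\gamma^{-4}$, so every term in $\E[h^2]$ must be expanded through that order. Squaring $h = k(x,x') + k(y,y') - k(x,y') - k(x',y)$ yields sixteen kernel products that collect into ten distinct expectations, which I would split into three groups: (i) four squared-kernel terms $\E k(a,b)^2$; (ii) two cross products whose kernels share no variable, namely $\E[k(x,x')k(y,y')]$ and $\E[k(x,y')k(x',y)]$, both of which factor; and (iii) four cross products whose kernels share exactly one variable, such as $\E[k(x,x')k(x,y')]$.

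For each of these I would use the Taylor expansion
\begin{equation*}
k(a,b) \;=\; 1 - \frac{\|a-b\|^2}{\gamma^2} + \frac{\|a-b\|^4}{2\gamma^4} + R(a,b),
\end{equation*}
with the residual controlled by the sixth-moment bound of Appendix Section 2; the hypothesis $\gamma=\Omega(\sqrt d)$ is precisely what keeps $\|a-b\|^2/\gamma^2$ bounded in expectation so that this expansion and its residual are valid. For the factoring and squared terms I would reuse the closed forms of Lemma 1 (e.g.\ $\E k(x,x') = (1-2\sigma^2/\gamma^2)^d$ and the analogous expression for $\E k(x,y')$), taking squares or products as needed, and substituting $2/\gamma^2$ for $1/\gamma^2$ to handle $\E k(a,b)^2$. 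For the four non-factoring cross products I would condition on the shared variable and Taylor-expand both inner conditional kernels; the resulting expectations reduce to polynomial moments of $\|x-x'\|^2$, $\|x-y'\|^2$, and mixed quartic quantities such as $\E\bigl[\|x-x'\|^2\|x-y'\|^2\bigr]$, each of which decomposes coordinate-by-coordinate under (A1) into polynomials in $d\sigma^2$, $d\sigma^4$, $d\mu_4$, $\|\delta\|^2$, $\sigma^2\|\delta\|^2$, $\|\delta\|^4$, and cross-coordinate sums like $(d\sigma^2)^2$.

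The main obstacle is bookkeeping the cancellations. All $O(1)$ constants and $O(1/\gamma^2)$ pieces in $\E[h^2]$ must cancel across the ten contributions (as they do for $\E h = \MMD^2$, which is itself only $O(\gamma^{-2})$), leaving a leading term of order $1/\gamma^4$; in addition, the $O(d^2\sigma^4/\gamma^4)$ pieces arising from factorizations such as $\E\|x-x'\|^2 \cdot \E\|y-y'\|^2 = 4d^2\sigma^4$ must cancel against one another and against the subtracted $(\MMD^2)^2$, since otherwise $V$ would scale like $d^2/\gamma^4$ rather than $d/\gamma^4$. After these collapses, only the coordinate-diagonal pieces linear in $d$ (aggregating to $16 d\sigma^4/\gamma^4$) and the pieces coupling the signal with one coordinate variance (aggregating to $16\sigma^2\|\delta\|^2/\gamma^4$) survive; the remaining $d\mu_4/\gamma^4$ and $\gamma^{-6}$ residual contributions are absorbed into the $1+o(1)$ factor, since they are dominated by $d\sigma^4/\gamma^4$ under (A1), (A2) and $\gamma=\Omega(\sqrt d)$. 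I would defer the explicit table of the ten contributions and their signed sum to the appendix, as the authors do for Lemma 1.
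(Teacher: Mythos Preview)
Your plan is essentially the paper's own: write $V=\E[h^2]-(\MMD^2)^2$, expand $h^2$ into its kernel-product pieces (the paper collapses your ten terms to five ``kinds'' by the $x\leftrightarrow x'$, $y\leftrightarrow y'$ symmetries), Taylor-expand each through order $\gamma^{-4}$ using the coordinatewise factorization from (A1), and sum.

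One correction to your last paragraph: the $d\mu_4/\gamma^4$ contributions cannot be ``absorbed into the $1+o(1)$ factor'' by being dominated by $d\sigma^4/\gamma^4$---indeed $\mu_4\ge\sigma^4$ always, so they are of the same order. What actually happens in the paper's bookkeeping is that the $d\mu_4/\gamma^4$ coefficients cancel \emph{exactly} across the signed sum ($4+4+2\cdot4+2\cdot2+2\cdot2-4\cdot3-4\cdot3=0$), just as the $d(d-1)\sigma^4/\gamma^4$ block does. Also, the subtracted $(\MMD^2)^2=4\|\delta\|^4/\gamma^4$ kills only the residual $\|\delta\|^4/\gamma^4$ term in $\E[h^2]$; it plays no role in eliminating the $d^2$ pieces, which cancel internally among the ten terms. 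If your appendix table does not exhibit these exact cancellations, the stated formula will not follow.
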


\begin{proof}
Note that $V = \E_{z,z'} h^2(z,z') - \MMD^4$ since $\MMD^2 = \E_{z,z'} h(z,z')$. Let us focus on the first term:

\begin{align*}
 \E_{z,z'}[h^2(z,z')] &= \textcolor{black}{\E_{x,x'\sim P} k^2(x,x')} + \textcolor{black}{\E_{y,y' \sim Q} k^2(y,y')}\\
  &+ \textcolor{black}{2\E_{x\sim P,y \sim Q} k^2(x,y)} \\
 &+ \textcolor{black}{2 \E_{x,x' \sim P,y,y' \sim Q} k(x,x')k(y,y')} \\
 &+ \textcolor{black}{2 \E_{x,x' \sim P,y,y' \sim Q} k(x,y')k(x',y) }\\
& - \textcolor{black}{4\E_{x,x' \sim P,y \sim Q} k(x,x')k(x,y)} \\
&- \textcolor{black}{4\E_{x \sim P,y,y' \sim Q} k(x,y)k(y,y') }
\end{align*} 
Hence, there are five different kinds of terms to calculate (the first and last two are similar). Combining these gives us our solution. The details are tedious and hence are given in the Appendix Section 4. 
\end{proof}

%\begin{enumerate}
%\item \textcolor{red}{$\E_{x,x'\sim P} k^2(x,x')$} 
%\item \textcolor{blue}{$\E_{x\sim P,y \sim Q} k^2(x,y)$}
%\item \textcolor{Bittersweet}{$\E_{x,x' \sim P,y,y' \sim Q} k(x,x')k(y,y')$}
%\item \textcolor{NavyBlue}{$\E_{x,x' \sim P,y,y' \sim Q} k(x,y')k(x',y)$}
%\item \textcolor{magenta}{$\E_{x,x' \sim P,y \sim Q} k(x,x')k(x,y)$} \end{enumerate}

\subsection{The Berry-Esseen Bound}

\begin{lemma} Under (A1), (A2), and when $\gamma = \Omega(\sqrt d)$, we have
\begin{eqnarray*}
\sup_t \left|
\mathbb{P}\left( \frac{\sqrt{n/2}(\MMD^2_l - \MMD^2)}{\sqrt V} \leq t\right) -\Phi(t) \right| \leq \frac{20}{\sqrt{n}}
\end{eqnarray*}
\end{lemma}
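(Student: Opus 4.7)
The plan is to apply the classical i.i.d.~Berry-Esseen theorem directly to the sum $\MMD^2_l = \frac{1}{n/2}\sum_{i=1}^{n/2} h(z_{2i-1}, z_{2i})$, which is a sample average of $n/2$ i.i.d.~real-valued random variables whose mean ($\MMD^2$) and variance ($V$) have already been identified in Lemmas 1 and 2. The classical Berry-Esseen bound then gives, for a universal constant $C_0$,
$$
\sup_t\,\Bigl|\,\mathbb{P}\!\left(\tfrac{\sqrt{n/2}(\MMD^2_l-\MMD^2)}{\sqrt V}\le t\right) - \Phi(t)\Bigr|
\;\le\; \frac{C_0\,\E|h(z,z')-\MMD^2|^3}{V^{3/2}\sqrt{n/2}}.
$$
To match the claimed $20/\sqrt n$ bound it therefore suffices to show that the ratio $\E|h-\MMD^2|^3 / V^{3/2}$ is bounded by an absolute constant independent of $d$ whenever $\gamma=\Omega(\sqrt d)$; the Berry-Esseen constant together with the $\sqrt 2$ coming from $\sqrt{n/2}$ vs.~$\sqrt n$ then compound to the numerical value $20$.

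The main obstacle is controlling the third absolute central moment of $h$. Rather than attack $\E|h-\MMD^2|^3$ directly (awkward because $h-\MMD^2$ has no controlled sign), I would use Cauchy-Schwarz in the form $\E|X|^3 \le \sqrt{\E X^2\cdot\E X^4} = \sqrt{V\,\tau_4}$, reducing the problem to computing the fourth central moment $\tau_4 = \E(h-\MMD^2)^4$ and showing $\tau_4 = O(V^2)$ uniformly in $d$. Expanding $(h-\MMD^2)^4$ reduces this to a finite collection of expectations of products of up to four kernel evaluations of the form $k(x,x')$, $k(y,y')$, $k(x,y)$, over independent copies. Each such expectation can then be handled by the same Taylor expansion machinery used in Lemma 2: write $k(u,v) = \exp(-\|u-v\|^2/\gamma^2) = 1 - \|u-v\|^2/\gamma^2 + \tfrac{1}{2}(\|u-v\|^2/\gamma^2)^2 - \cdots$, evaluate the leading polynomial terms in $s_i,t_i,\delta$ using Assumption A1, and bound the residuals using Assumption A2 (existence of up to sixth moments of the coordinates) together with $\gamma=\Omega(\sqrt d)$.

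The calculation is more tedious than for Lemma 2 but structurally identical: at leading order every contribution to $\tau_4$ turns out to be of the form (constant)$\cdot(d\sigma^4+\sigma^2\|\delta\|^2)^2/\gamma^8$, which is exactly of order $V^2$, and the residual terms from the Taylor expansion are subleading precisely because $\gamma=\Omega(\sqrt d)$. Combining with Cauchy-Schwarz gives $\E|h-\MMD^2|^3/V^{3/2}=O(1)$ independent of $d$, and plugging into the i.i.d.~Berry-Esseen inequality yields the stated $20/\sqrt n$ bound. The genuinely hard part of executing the plan is not conceptual but combinatorial: the expansion of $(h-\MMD^2)^4$ produces a large number of cross terms that must be expanded and re-grouped carefully to verify that no leading contribution escapes the $O(V^2)$ bound, and this is exactly the place where the bandwidth condition $\gamma=\Omega(\sqrt d)$ and the sixth-moment hypothesis in A2 are consumed to keep the constant dimension-free.
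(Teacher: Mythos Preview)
Your proposal is correct and essentially identical to the paper's proof: the paper applies the i.i.d.\ Berry--Esseen bound with constant $10$, then bounds $\xi_3 := \E|h-\MMD^2|^3 \le \sqrt{\tau_4}\sqrt{V}$ via Cauchy--Schwarz exactly as you suggest, and finally shows $\tau_4 \le (4+o(1))V^2$ by the same Taylor-expansion bookkeeping used in Lemma~2, yielding $\xi_3 \le 2V^{3/2}$ and hence $20/\sqrt n$. Your identification of the combinatorial expansion of the fourth central moment as the laborious step, and of where A2 and $\gamma=\Omega(\sqrt d)$ enter, matches the paper precisely.
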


\begin{proof}
The Berry-Esseen Lemma (see for example Theorem 3.6 or 3.7 in \cite{cgs}), when translated to our problem, essentially yields the above lemma, except that the right hand side is
\begin{equation}\label{eq:BE}
10 \frac{\xi_{3}}{V^{3/2} \sqrt{n}}
\end{equation}
where $\xi_3=\E[|h(z,z') - \E h(z,z')|^3]$, and the constant 10 is not optimal. Note that $\xi_3 \neq \tau_3$ (third central moment of $h$) due to the absolute value sign. Given that we have the mean and second central moment of $h$ ($\MMD^2$ and $V$ respectively), one might imagine using similar techniques to calculate $\xi_3$. However, the absolute value poses a problem, and so we must take an alternate route. Specifically, tedious calculations in the Appendix Section 5 prove that $\tau_4$ (the fourth central moment of $h$) is bounded as
$$
\tau_4 \leq (4+o(1)) V^2,
$$
allowing us to bound $\xi_3$ as
$$
\xi_3 \leq \sqrt{\tau_4} \sqrt{V} \leq 2 V^{3/2}
$$
since $\E|X|^3 \leq \sqrt{\E|X|^4} \sqrt{\E|X|^2}$ by Cauchy-Schwarz.  %Alternately Jensen's inequality gives $\xi_3 \leq \tau_4^{3/4}$. 
Substituting into Eq.\eqref{eq:BE} gives us our Lemma.

\end{proof}

The main challenge involved is in proving that the ratio $\xi_3/V^{3/2}$ is independent of $d$.
Note that a very crude bound of $|h - \E h| \leq 4$ (since $e^{-z} \leq 1$) gives us $\xi_3 \leq 4V$, which would yield a dimension dependence due to an extra $\sqrt V$ factor, but because $\tau_4$ (and hence $\xi_3$) has exactly the right scaling with $V$, the dependence on $V$ (and hence, importantly, the dimension) cancels out and our Lemma follows. This is only one of the reasons we needed a bound on $\tau_4$, the other appearing in the next lemma.

%NOTE (to self): This bound seems weak because what we would really like to do is say (wrongly) that 
%$$\mu_3 \leq \E[|h(z,z')|^3] \leq \E[|h(z,z')|]\E[h^2(z,z')] \leq \sqrt V \cdot V$$
%This would make the right hand side behave like $O(1/\sqrt n)$.

\subsection{Bounding $\sqrt{v/V}$}

Recall that $v$ is  the empirical estimator of $V$ - it is an empirical average of $n/2$ unidimensional terms. The subtlety is that $v$ depends on $d$ since $V$ depends on $d$. 

\bigskip

What matters is whether the rate of convergence of their ratio to $1$ depends on $d$ - fortunately it does not.

\begin{lemma} Under (A1),(A2), and when $\gamma = \Omega(\sqrt d)$, we have 
$$
\sqrt{v/V} = 1 + O_P(1/n^{1/4})
$$
\end{lemma}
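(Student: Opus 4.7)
The plan is to view $v$ as the ordinary sample variance of the $n/2$ i.i.d.\ scalar random variables $h_i := h(z_{2i-1}, z_{2i})$ that already appear in the definition of $\MMD^2_l$. These $h_i$ have mean $\MMD^2$, variance $V$, and fourth central moment $\tau_4$. Because $v$ and $V$ both carry the same prefactor of $2$, the ratio $v/V$ is exactly the ratio of the scalar sample variance to the scalar population variance. The question thus reduces to a standard univariate concentration argument, but with the crucial twist that both numerator and denominator depend on $d$; we must show their ratio concentrates at a rate that does \emph{not}.

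First I would note that $\E[v] = V$ (exactly, under the Bessel-corrected convention; otherwise up to $O(1/n)$ bias), so there is no bias issue. Next I would invoke the standard identity for the variance of a sample variance on $m = n/2$ i.i.d.\ draws:
\begin{equation*}
\mathrm{Var}(v) \;=\; \frac{2}{n}\left(\tau_4 - \tfrac{n/2-3}{n/2-1}V^2\right) + O(n^{-2}).
\end{equation*}
The decisive input is the bound $\tau_4 \leq (4+o(1))\,V^2$ already proved in the preceding subsection (Appendix Section~5), which yields $\mathrm{Var}(v) \leq (6+o(1))\,V^2/n$, i.e.\ $\mathrm{Var}(v)/V^2 = O(1/n)$ \emph{uniformly in $d$}. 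Chebyshev's inequality then gives $\Pr(|v/V - 1| > \epsilon) \leq C/(n\epsilon^2)$, so $v/V = 1 + O_P(1/\sqrt{n})$. Finally, since $t \mapsto \sqrt{t}$ is differentiable at $t=1$ — or more elementarily, from the factorization $\sqrt{v/V} - 1 = (v/V - 1)/(\sqrt{v/V}+1)$ combined with the fact that the denominator converges in probability to $2$ — the same rate transfers to $\sqrt{v/V} = 1 + O_P(1/\sqrt{n})$, which is in fact stronger than the claimed $O_P(1/n^{1/4})$.

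The main obstacle is not the probabilistic machinery but guaranteeing the $d$-free scaling of $\mathrm{Var}(v)/V^2$. A naive bound such as $|h| \leq 4$ (valid since $e^{-z}\leq 1$) would give $\tau_4 \leq 256$, a dimension-free constant, while Lemma~2 yields $V = \Theta(d\sigma^4/\gamma^4) \to 0$ in the regime $\gamma = \Omega(\sqrt{d})$; this would make $\mathrm{Var}(v)/V^2$ grow with $d$ and destroy the whole argument. Exactly as in the Berry--Esseen step of Lemma~3, everything hinges on the tight matching bound $\tau_4 \leq (4+o(1))V^2$: that is the only place where the Taylor-expansion bookkeeping and the bandwidth restriction $\gamma = \Omega(\sqrt{d})$ are actually used. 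Once that bound is in hand, the remainder of the proof is a textbook Chebyshev-plus-delta-method exercise.
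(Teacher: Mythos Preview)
Your proposal is correct and follows essentially the same route as the paper: cite the standard bias/variance formulas for a sample variance (the paper invokes Serfling, Theorem~A of Section~2.2.3), plug in the bound $\tau_4 \leq (4+o(1))V^2$ from the preceding subsection, and apply Chebyshev to get $v/V = 1 + O_P(1/\sqrt{n})$ with a constant independent of $d$. Your observation that the square-root step actually delivers $O_P(1/\sqrt{n})$ rather than merely $O_P(1/n^{1/4})$ is also correct and is implicit in the paper's own argument, which likewise establishes $v - V = O_P(V/\sqrt{n})$.
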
 
\begin{proof}
Using $k=2$ in Theorem A of Section 2.2.3 in \cite{serfling}, the bias of $v$ is given by
$$
\E[v] - V = -\frac{2V}{n}
$$
and its variance is given by
$$
\mathrm{var}(v) = \frac{\tau_4 - V^2}{n} \leq \frac{3V^2}{n}
$$
both up to smaller order terms (where the inequality follows from the previous lemma).

Then, it is easy to see that $v = V\left(1 + O_P\left(\frac1{\sqrt  n}\right)\right)$, i.e. $v - V = O_P(V/\sqrt n)$. This is because for any $\epsilon > 0$,
\begin{eqnarray*}
&& P\left(\left|\frac{v-V}{V/\sqrt n} \right| > \frac{3 + 2\sqrt{\epsilon}}{\sqrt \epsilon} \right)\\
 &=& P\left(|v - \E[v]| > \frac{3V + 2V\sqrt{\epsilon}}{\sqrt {n \epsilon}} - \frac{2V}{n} \right)\\
 &\leq& \frac{\mathrm{var}(v)}{\left(\frac{3V}{\sqrt {n \epsilon}} + \frac{2V}{\sqrt n} - \frac{2V}{n} \right)^2} \\
 &\leq& \epsilon
\end{eqnarray*}
where we used Chebyshev's inequality, and the second inequality follows since $\frac{3V}{\sqrt {n \epsilon}} + \frac{2V}{\sqrt n} - \frac{2V}{n} \geq \frac{3V}{\sqrt {n \epsilon}}$.

\end{proof}

At this point we have all the key elements of the proof of Theorem 1. Specifically, equations \eqref{eq:proof1} to \eqref{eq:proof3} follow exactly as written, with the exception of \eqref{eq:proof2} holding even with a $\xrightarrow{n,d \rightarrow \infty}$ - note that this step allows $n,d$ to grow at any relative rate to $\infty$ precisely because the rate at which $Q$ converges to the standard normal $Z$ (Berry-Esseen bound) and the rate at which $v/V$ converges to 1, were both independent of $d$ and only needs $n \rightarrow \infty$. The dependence on $d$ only enters through the $\MMD^2$ and its variance.

This concludes the proof of Theorem 1. One can also write down the finite sample type-1 error rate as being at most $\alpha + 20/\sqrt n$ and the finite sample power as being at least $\beta - 20/\sqrt n$, where the additional error is introduced due to the Berry-Esseen bound (whose constants we don't optimize, but could be tightened to about 15 instead of 20).

We now confirm the tightness of all the predictions in this section by detailed simulations in the next section.

\section{Experiments}

Our aim in this section is to confirm the theoretical predictions made by our lemmas and theorems. The most important claims to address are that the Berry-Esseen bound is independent of $d$, the null and alternate distributions are indeed normal even in the extreme case when $n$ is fixed and $d$ is increasing, the ratio of $\MMD^2/\sqrt V$ is (essentially) independent of the bandwidth, and finally the final power expression is (essentially) independent of the bandwidth and has the exact predicted scaling as given by our expressions.

\subsection{Berry-Esseen bound is independent of $d$}
Since the calculations of $\tau_4$ are rather tedious, let us also verify the prediction made in Subsection 4.3 that $\xi_3/V^{3/2}$ is constant and independent of dimension (remember that the ratio involves population quantities). To verify this, we draw 1000 samples from $P,Q$, and calculate the empirical ratio for $d$ ranging from 40 to 1000, in steps of 20. We make 3 sets of choices for $P,Q$ - standard normals with $\gamma = d^{0.75}$, $t_4$ distribution with $\gamma = d^{0.5}$ and $t_4$ distribution with $\gamma = d$. The reason we use $t_4$ ($t$ distribution with 4 degrees of freedom) is because it does not have a finite fourth moment $\tau_4$. We find that in all 3 cases, the ratio is a constant of about 1.65, showing that our prediction is extremely accurate. Also, while our proof proceeded via bounding $\tau_4$, it seems to hold true even when higher moments than $3$ don't exist, since it holds for the $t_4$ distribution. The spikes are because we calculate a single empirical ratio at each $d$.

\begin{figure} [h!]
\centering
\includegraphics[width=0.4\linewidth]{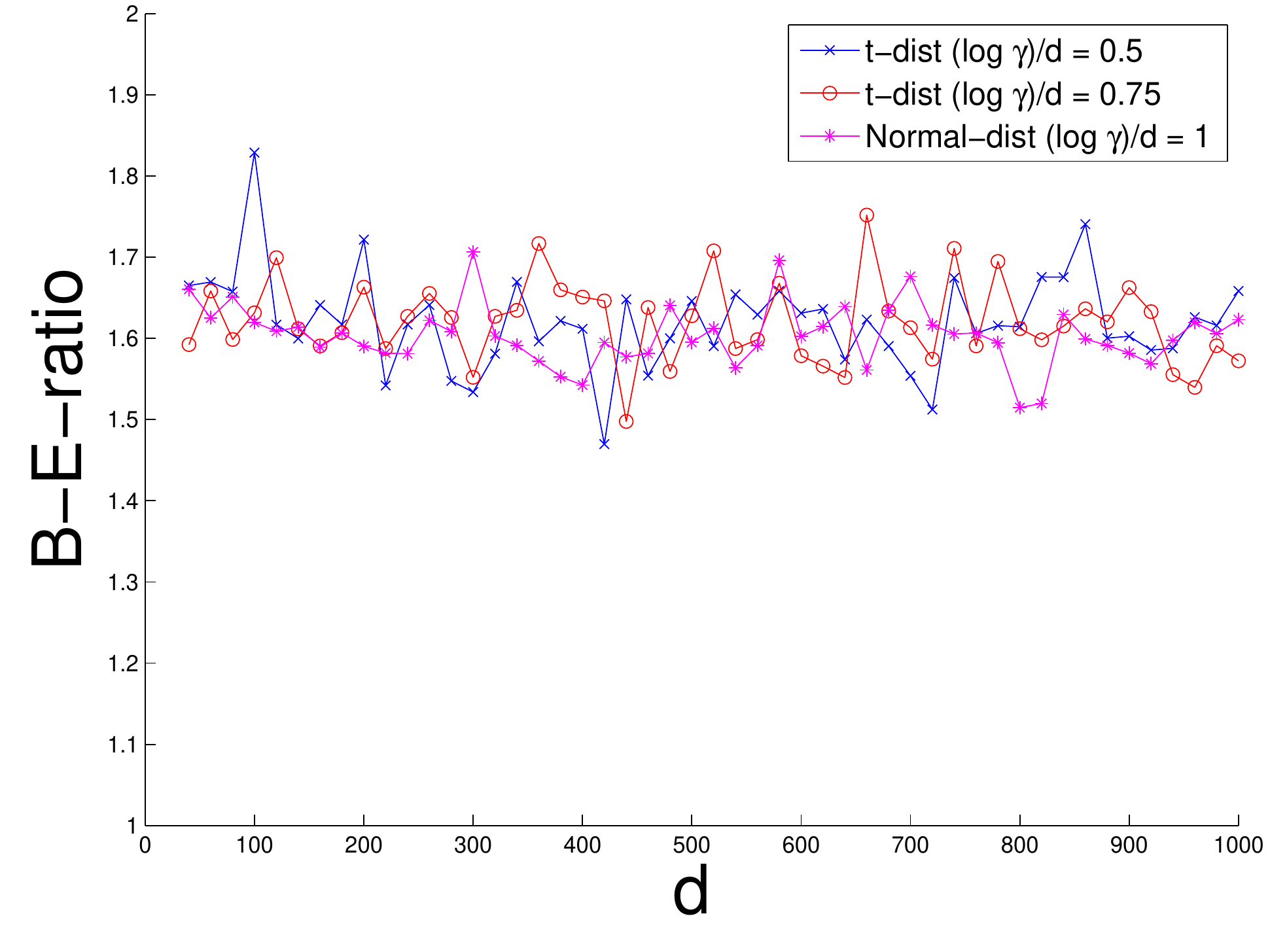}
\caption{The empirical Berry-Esseen ratio $\xi_3/V^{3/2}$ vs dimension, when $n=1000$ for the distributions $t_4, t_4$ and normal, with bandwidths $d^{0.5},d,d^{0.75}$ respectively. }
\label{fig:BEratio}
\end{figure}

\subsection{Normality of null/alternate distributions}

Let us now verify that the null and alternate distributions are indeed (almost) standard normal when $n$ is held constant and $d$ is increased. We do this by fixing $n=50$, and choosing $d\in \{50,100,200\}$  and calculating our test statistic $\sqrt n \MMD_l^2 / \sqrt v$. We experimentally approximate the null and alternate distributions by repeating this process 1000 times; the histogram obtained is compared to a normal by plotting a standard normal quantile-quantile plot. The overlapping straight lines indicate that each of the null and alternate distributions (for three different $d$ values) are almost exactly standard normal even at a small value of $n$ like $50$. This agrees with our derivation that the Berry-Esseen constant is very small and normality is achieved soon.

\begin{figure} [h!]
\centering
\includegraphics[width=0.4\linewidth]{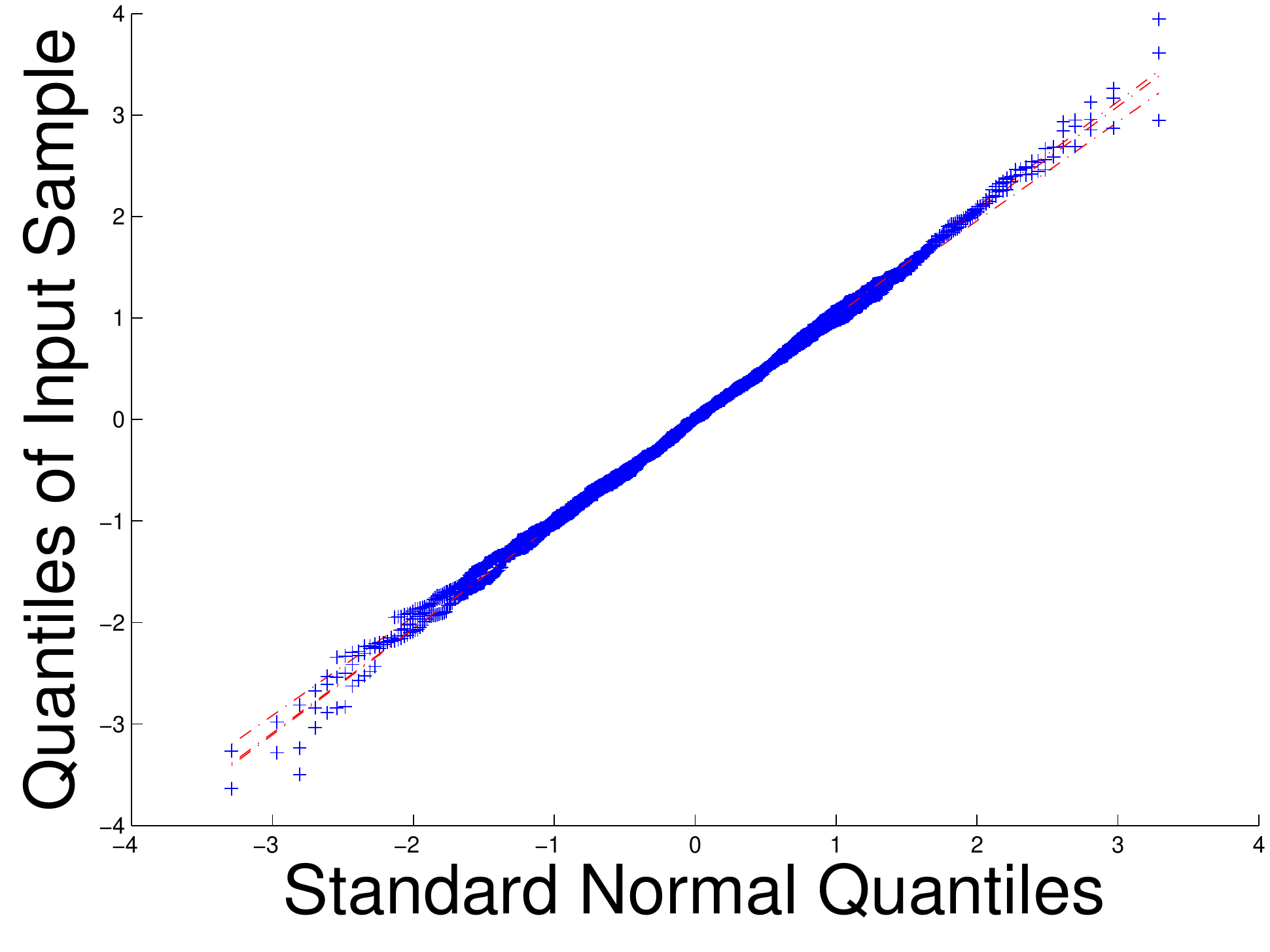}
\includegraphics[width=0.4\linewidth]{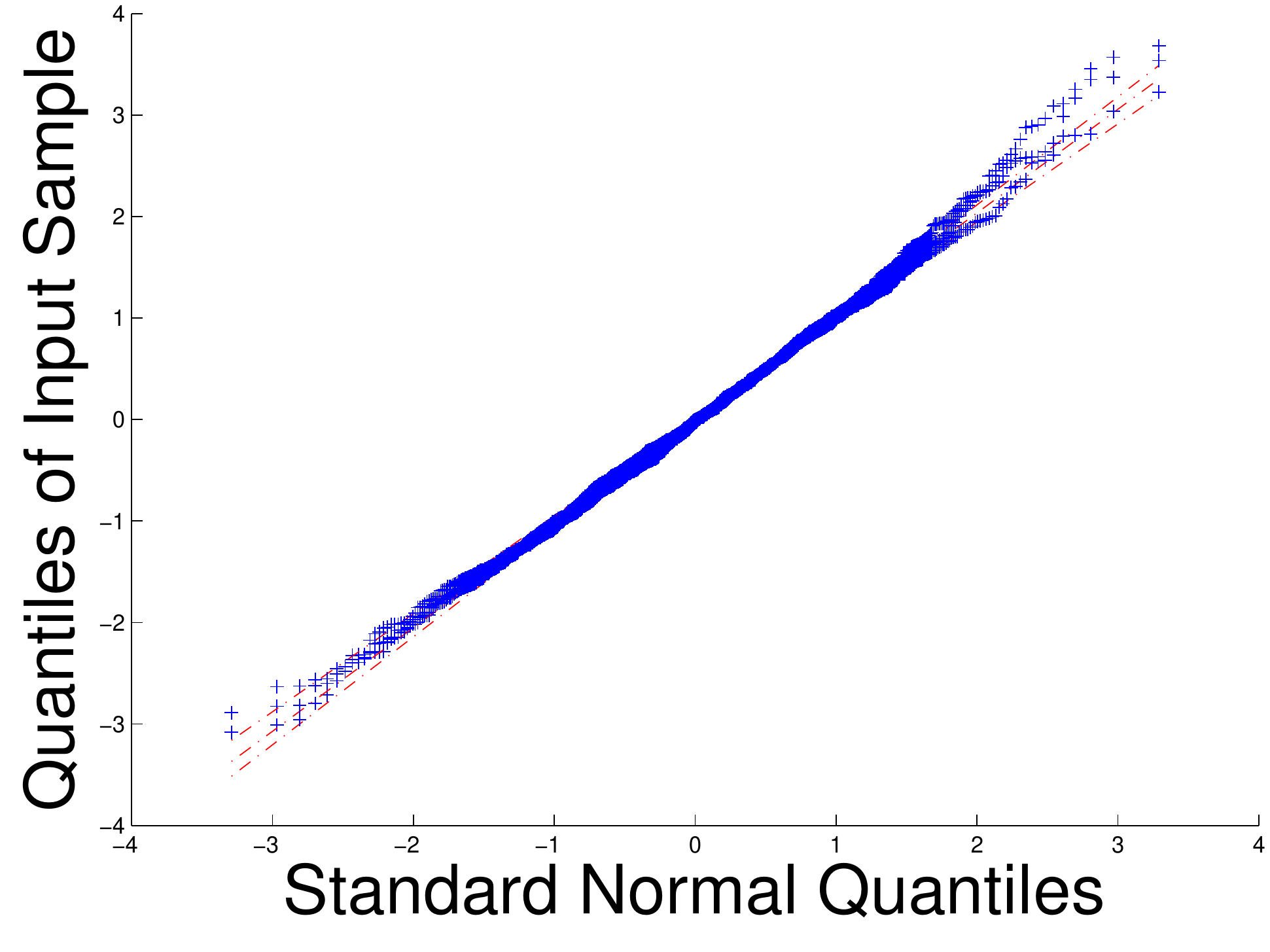}
\caption{A normal quantile-quantile plot of null (left) and alternate (right) distributions of our test statistic for $d=50,100,200$ when $n=100$ (1000 repetitions).}
\label{fig:BEratio}
\end{figure}

\subsection{$\MMD^2/\sqrt V$ is independent of bandwidth}

\begin{figure} [h!]
\centering
\includegraphics[width=0.4\linewidth]{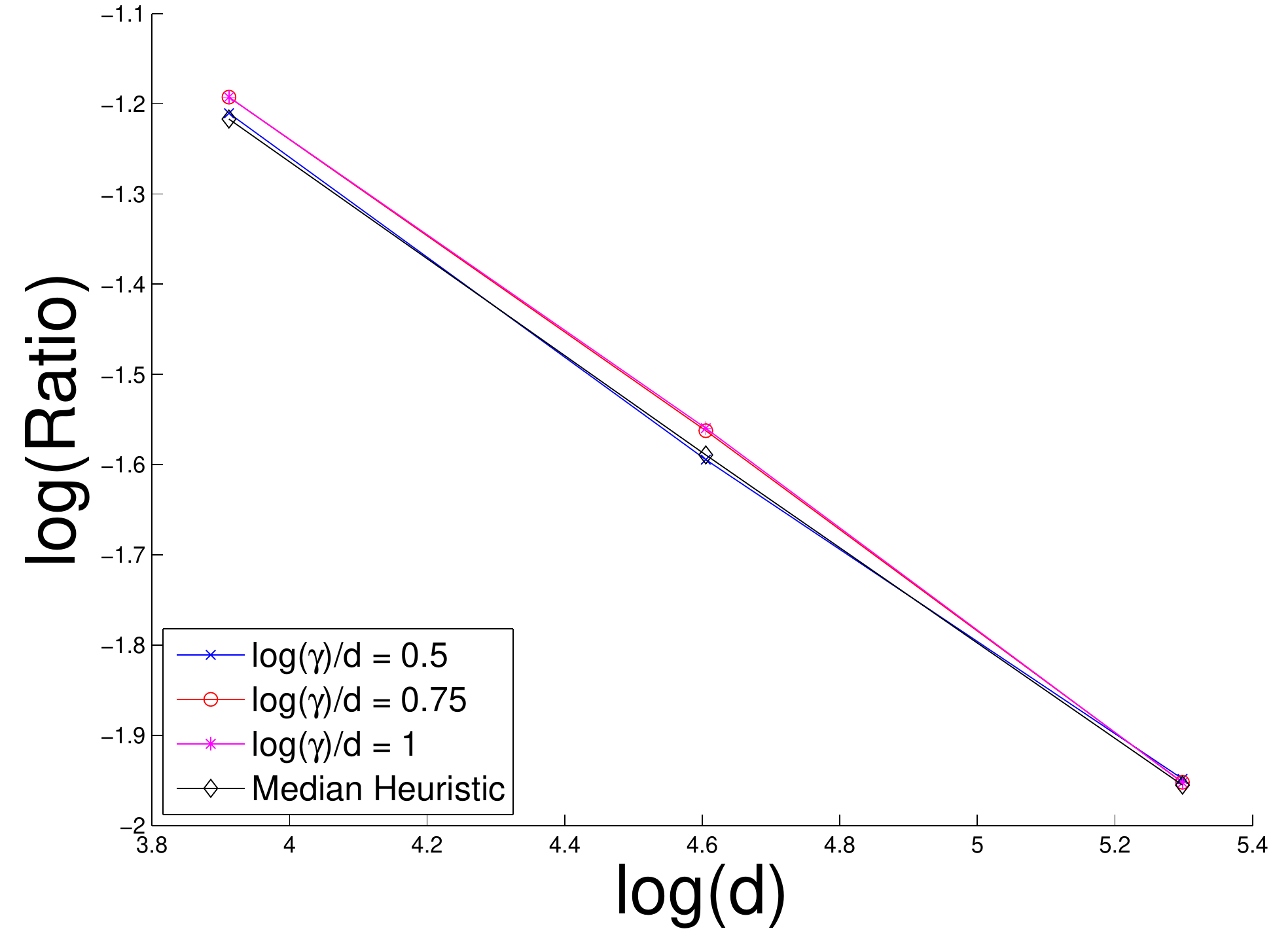}
\caption{A log-log plot of $ \MMD^2 / \sqrt{V}$ vs dimension for different bandwidth choices when $\Psi=1$ and $n$ is large. Note that the slope is $-0.5$, independent of $\gamma$.}
\label{fig:BEratio}
\end{figure}

Our first two lemmas together imply that the ratio $ \MMD^2 / \sqrt{V}$ is independent of $\gamma$ as long as $\gamma = \Omega(\sqrt d)$. To test this, we actually calculate this ratio for $\gamma = d^{0.5}, d^{0.75}, d$. Remember that these are population quantities - we will estimate the ratio using sample quantities using a large $n$, when $\Psi=1$. We plot the obtained log-ratio against log-dimension in Figure 3, showing that the power scales  as $1/\sqrt{d}$ as predicted.

\subsection{The scaling of power with $n,d$}

Here are a few testable predictions of Theorem 1:
\begin{enumerate}
\item When $n=50$ and $\Psi=2.5$, the power should decrease as $1/\sqrt d$ (Corollary 1).
\item When $n=50$, and $\Psi = d^{1/4}$, then the power should be a constant (Corollary 1).
\item When $n=d$, and $\Psi=2$, the power should stay constant (Corollary 1).
\item When $n=d$, and $\Psi = 0.3 d^{1/2}$, then the power should increase as $\sqrt d$ (Corollary 2).
\end{enumerate}
\begin{figure} [h!]
\centering
\includegraphics[width=0.4\linewidth]{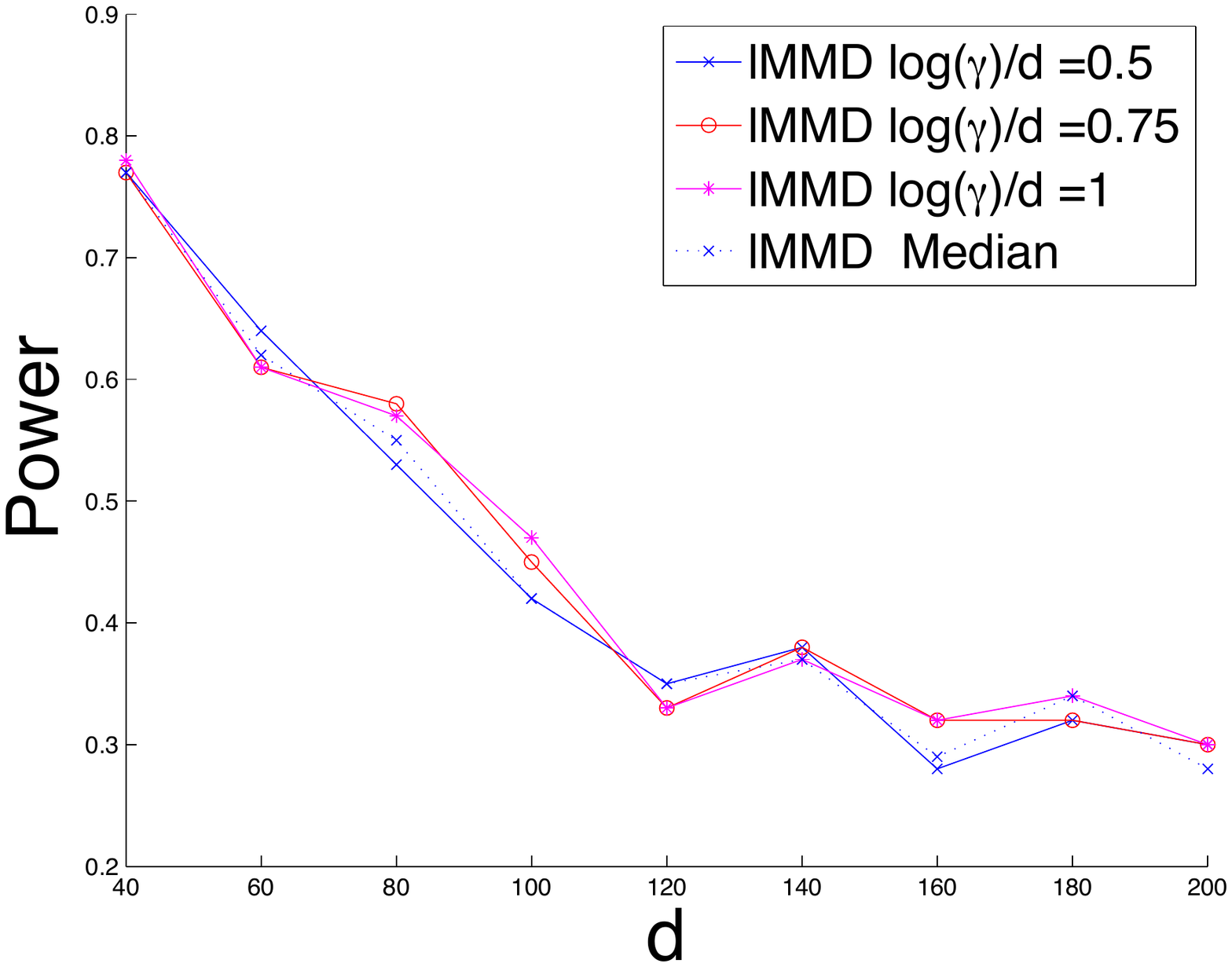}
\includegraphics[width=0.4\linewidth]{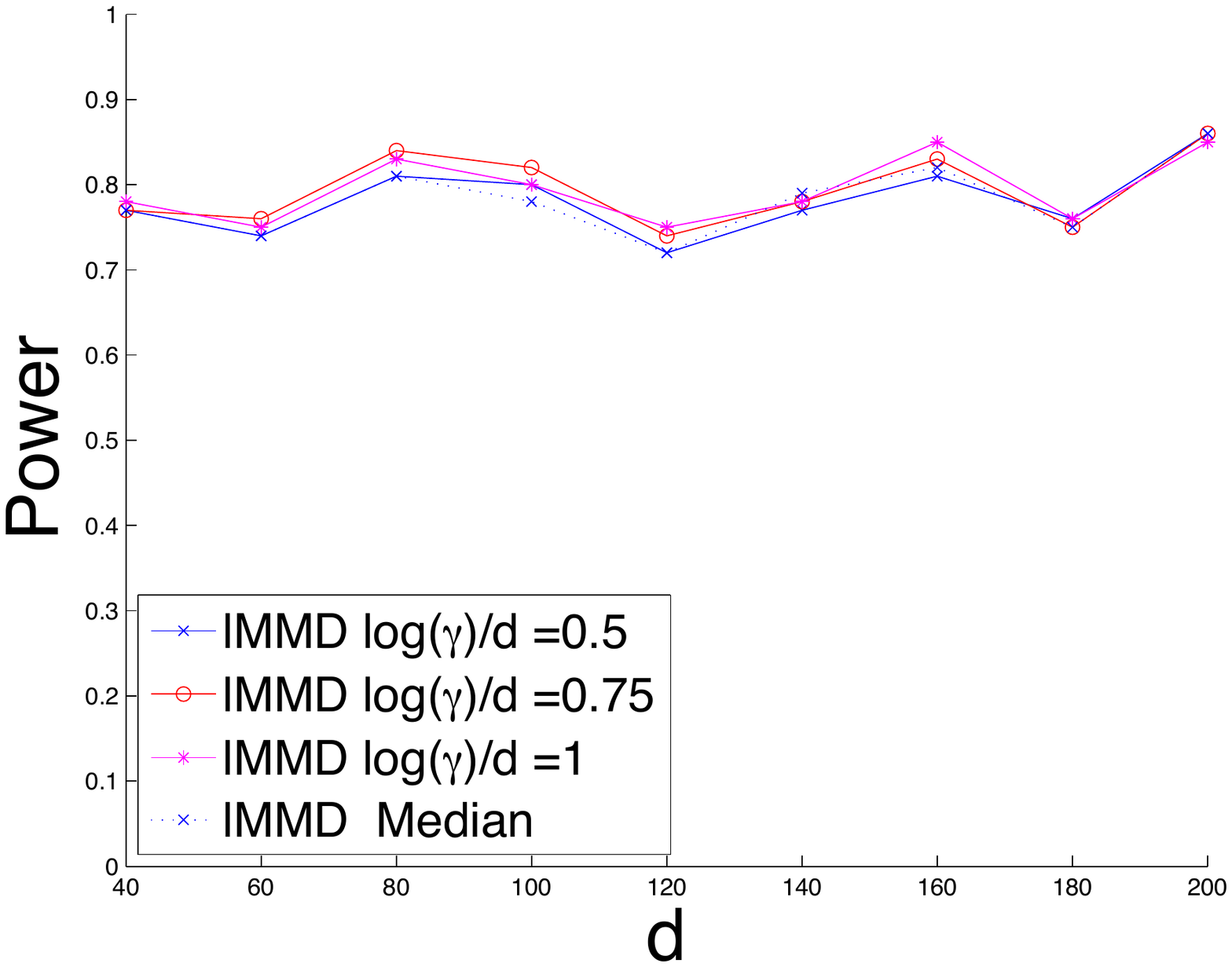}
\includegraphics[width=0.4\linewidth]{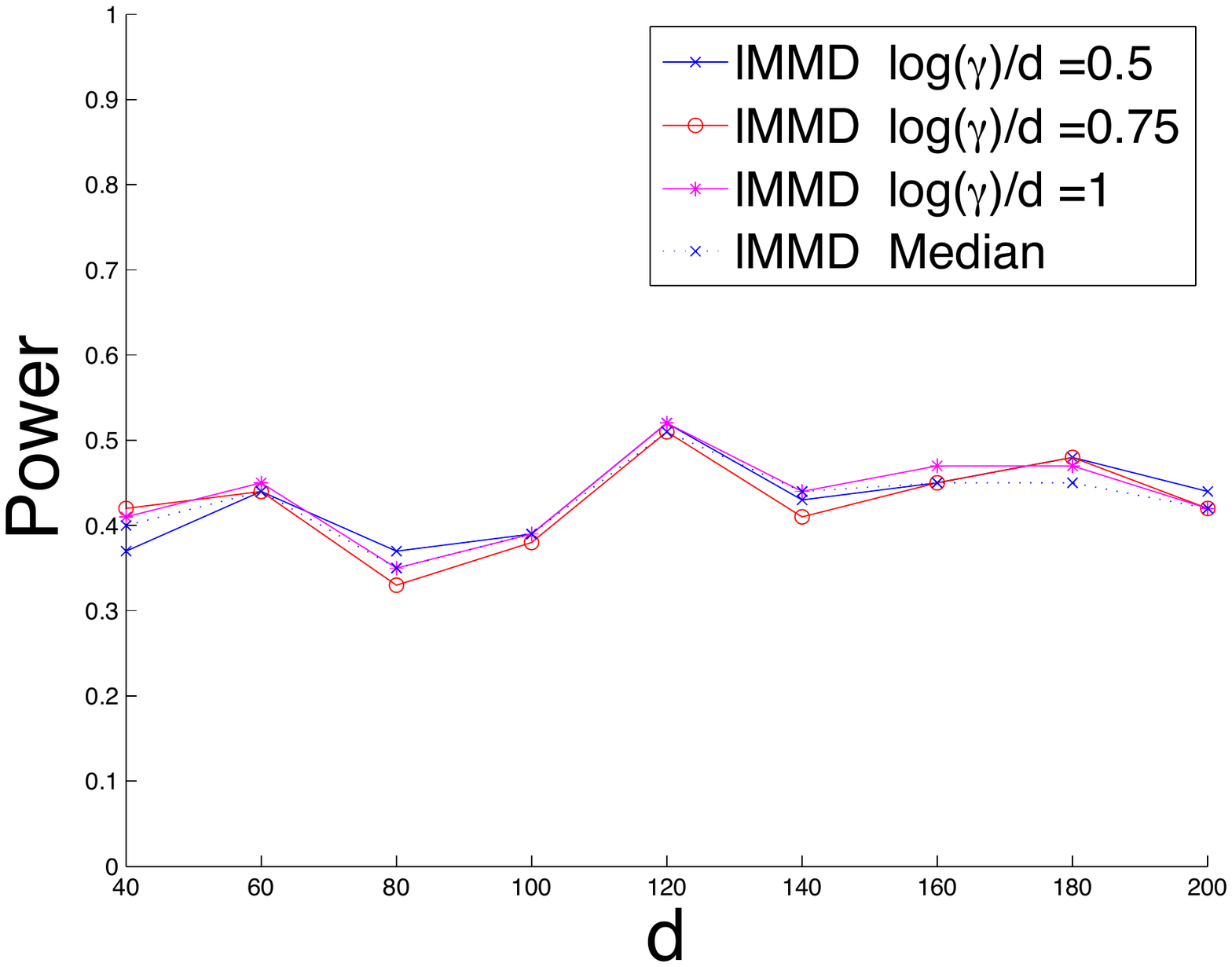}
\includegraphics[width=0.4\linewidth]{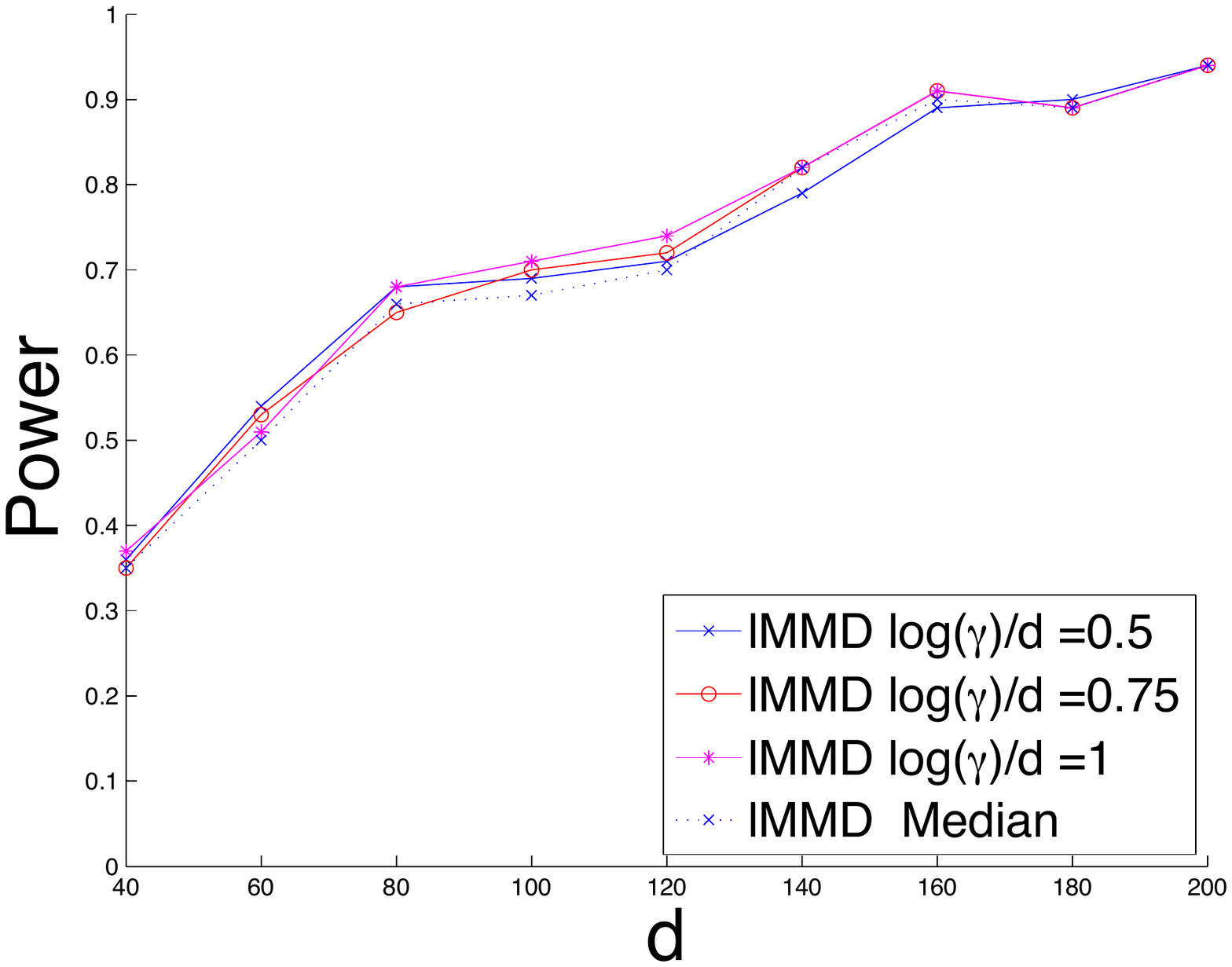}
\caption{All plots show power vs $d$ for different $\gamma \in \{\mbox{median},d^{0.5},d^{0.75},d\}$ for $d=40$ to $200$ in steps of 20. From top left to bottom right are the settings 1-4, with $P,Q$ being Gaussians. The power is estimated over 100 repetitions at each $d$. }
\label{fig:BEratio}
\end{figure}
From Figure 4, we infer that the precise form of Theorem 1 (and Corollaries 1,2) is extremely accurate, even at small $n$ and significantly larger $d$, including that it is independent of the bandwidth $\gamma$ as predicted, as long as $\gamma = \Omega(\sqrt d)$.
\section{Conclusion}
This paper has two main novelties - the first is to  precisely characterize how a nonparametric two sample test, which is consistent in fixed dimensions against general alternatives, performs against a mean-shift alternative; the second is to perform the analysis in the significantly more difficult high-dimensional regime. 

Future work involves understanding $\MMD_u$, but the limiting distributions of general U-statistics are be difficult to ascertain in high dimensions. Another direction involves the study of \textit{sparse} alternatives, where $\delta$ is sparse, as done by \cite{caietal14}. Lastly, minimax lower bounds are required to understand the tradeoffs involved between $\Psi,d,n$.

\bibliographystyle{natbib}
\bibliography{mmd}

\appendix
%\onecolumn

%\documentclass[10pt]{article}
%\usepackage{amsmath,amssymb,fullpage,graphicx,mathtools}
%\usepackage[usenames,dvipsnames]{color}
%\usepackage[top=1in, bottom=1in, left=1in, right=1in]{geometry}
%\usepackage{cancel}
%\usepackage{amsthm}
%\let\hat\widehat
%\let\tilde\widetilde
%
%\newtheorem{theorem}{Theorem}
%\newtheorem{lemma}[theorem]{Lemma}
%\newtheorem{example}[theorem]{Example}
%\newtheorem{corollary}[theorem]{Corollary}
%\newtheorem{proposition}[theorem]{Proposition}
%\newtheorem{remark}[theorem]{Remark}
%\newtheorem{observation}{Observation}
%%\newenvironment{proof}{{\bf Proof.}}{$\Box$}
%\renewcommand\qedsymbol{$\blacksquare$}
%\DeclareMathOperator*{\argmax}{argmax}
%\DeclareMathOperator*{\argmin}{argmin}
%
%\newcommand{\MMD}{\mathrm{MMD}}
%
%\renewcommand{\P}{\mbox{$\mathbb{P}$}}
%\newcommand{\E}{\mbox{$\mathbb{E}$}}
%
%%%%% new version of enumerate with less spacing
%\newenvironment{enum}{
%\begin{enumerate}
%  \setlength{\itemsep}{1pt}
%  \setlength{\parskip}{0pt}
%  \setlength{\parsep}{0pt}
%}{\end{enumerate}}
%
%\parskip 10pt
%\parindent 0pt
%
%\title{Appendix}
%
%\begin{document}
%\date{}
%\maketitle

\section{The Power of CQ for high SNR}
Let us first briefly describe what we believe is an important mistake in \cite{cq} - all notations, equation numbers and theorems in this paragraph refer to those in \cite{cq}. Using the test statistic $T_n/\hat \sigma_{n1}$ defined below Theorem 2, we can derive the power under assumption (3.5) as

\begin{eqnarray*}
&&P_1 \left( \frac{T_n}{\hat \sigma_{n1}} > \xi_\alpha \right) =\\ 
&=&P_1 \left( \frac{T_n - \|\mu_1 - \mu_2\|^2}{\hat \sigma_{n2}} > \frac{\hat \sigma_{n1}}{\hat \sigma_{n2}} \xi_\alpha -\frac{ \|\mu_1 - \mu_2\|^2}{\hat \sigma_{n2}} \right)\\
& \rightarrow& \Phi \left( \frac{ \|\mu_1 - \mu_2\|^2}{\hat \sigma_{n2}} \right) \mbox{ (the denominator is \textit{not} $\hat \sigma_{n1}$)}\\
 &=&\Phi \left( \frac{\sqrt{n} \|\mu_1-\mu_2\|^2}{\sqrt{(\mu_1 - \mu_2)^T \Sigma (\mu_1 - \mu_2)}} \right) 
 \end{eqnarray*}
which should be the expression for power that they derive in Eq.(3.12), the most important differnce being the presence of $\sqrt n$ instead of $n$ in the numerator. They also do not have an explicit Berry-Esseen bound dealing with the deviation from normality. \\

\section{Remarks for this Appendix}
\subsection{Taylor Expansion}

In all our calculations, we use the Taylor expansion for the function $e^{-x}$ around 0. More specifically, we have

\begin{align*}
\int_u \int_v e^{-\frac{(u - v)^2}{\gamma^2}} p_i(u) q_i(v) du dv = \int_u \int_v \left[1 - \frac{(u - v)^2}{\gamma^2} + \frac{e^{-\frac{\lambda_{uv}(u - v)^2}{\gamma^2}} (u-v)^4}{2\gamma^4} \right] p_i(u) p_i(v) du dv
\end{align*}
where $\lambda_{uv} \in [0,1]$. The above equality follows from the exact formula for Taylor expansions having exact residuals. Note that
$$
e^{-\frac{\lambda_{uv}(u - v)^2}{\gamma^2}} \leq 1.
$$ 
When $\gamma = \Omega(\sqrt{d})$ and fourth moments of the distributions $p_i$ and $q_i$ exist, the above integral becomes
$$
\int_u \int_v e^{-\frac{(u - v)^2}{\gamma^2}} p_i(u) q_i(v) du dv = \int_u \int_v \left[1 - \frac{(u - v)^2}{\gamma^2} \right] p_i(u) p_i(v) du dv + o\left(\frac{1}{\gamma^2}\right)
$$

Similarly, an higher order expansion can also be obtained by assuming existence of sixth order moments. For ease of exposition, we drop $o(1)$ throughout our calculations. To emphasize this issue, we use $\approx$ symbol in our calculations to indicate that the $o(1)$ term is ignored. 

\subsection{Independent Coordinates}

In our calculations, we assume that the coordinates of $x,y$ are independent and that their central moments are $\sigma^2,\mu_3,\mu_4$. In other words, we use $U=I$ in Assumption 1 to derive expressions in this Appendix. However, this is only for ease of exposition and all our proofs hold even when $U \neq I$. This can be seen from the following argument.\begin{align*}
\|x - y\|^2 &= \|Us + \mu_P - Ut - \mu_Q\|^2 \\
&= \left\|U\left(s - t + U^\top (\mu_P - \mu_Q)\right)\right\|^2 \\
&= \|(s + U^\top \mu_P) - (t + U^\top \mu_Q)\|^2 \\
&= \|s' - t'\|^2.
\end{align*}
where $s' = s +  U^\top \mu_P$ and $t' = t + U^\top \mu_Q$.
Since $U^T\mu_P$ and $U^T \mu_Q$ are just rotated mean vectors, the coordinates of $s'$ and $t'$ are independent (since the coordinates of $s,t$ are independent in assumption A1) and $s',t'$ still have the same central moments as $s,t$. 

Using the above relation, we can rewrite our calculations involving $e^{-\|x-y\|^2/\gamma^2}$ in terms of $e^{-\|s'-t'\|^2/\gamma^2}$. Note that the difference between the means of (the distributions on) $x,y$ is $\|\mu_P - \mu_Q\|^2$ and the that the difference between the means of (the distributions on) $s',t'$ is also $\|U^\top \mu_P - U^\top \mu_Q\|^2 = \|\mu_P - \mu_Q\|^2$ since $U$ is orthogonal. So all the problem parameters remain the same, except we shift from non-independent coordinates for $x,y$ to independent coordinates for $s,t$.

\section{Proof of Lemma 1}
First note that we can rewrite the population $\MMD^2$ as
\begin{align*}
\MMD^2 &= E_{x,x' \sim P}[k(x,x')] + E_{y,y'\sim Q}[k(y,y')] - 2 E_{x \sim P,y \sim Q}[k(x,y)] \\
&= \int_u \int_v e^{-\frac{\|u - v\|^2}{\gamma^2}} p(u) p(v) dv du + \int_u \int_v e^{-\frac{\|u - v\|^2}{\gamma^2}} q(u) q(v) dv du - 2 \int_u \int_v e^{-\frac{\|u - v\|^2}{\gamma^2}} p(u) q(v) dv du\\\\
\end{align*}
We calculate each of these integrals in the following manner. Since the coordinates of the $P$ and $Q$ are independent, we have
\begin{align*}
\int_u \int_v e^{-\frac{\|u - v\|^2}{\gamma^2}} p(u) p(v) dv du &= \prod_i \left( \int_u \int_v e^{-\frac{(u - v)^2}{\gamma^2}} p_i(u) p_i(v) dv du \right) \\
&\approx \prod_i \int_u \int_v \left[1 - \frac{(u - v)^2}{\gamma^2}\right] p_i(u) p_i(v) du dv \\
&= \left( 1 - \frac{2\sigma^2}{\gamma^2} \right)^d
\end{align*}

The last two steps follow from the fact that the coordinates are independent and definition of the second moments of the distributions $p_i$ and $q_i$ (see Section~\ref{sec:doubint} of the Appendix). Similarly the corresponding term for distribution $Q$ is
\begin{align*}
\int_u \int_v e^{-\frac{\|u - v\|^2}{\gamma^2}} q(u) q(v) dv du &\approx \left( 1 - \frac{2\sigma^2}{\gamma^2} \right)^d
\end{align*}

For the final term, we have
\begin{align*}
\int_u \int_v e^{-\frac{\|u - v\|^2}{\gamma^2}} p(u) q(v) dv du
& \approx \prod_i \left( \int_u \int_v \left[ 1 - \frac{(u-v)^2}{\gamma^2}\right]p_i(u) \,q_i(v) \,du \,dv \right) \\
&=  \prod_i \left( \int_u \int_v \left( 1 - \frac{(u-\mu_{Pi})^2}{\gamma^2} - \frac{(v-\mu_{Pi})^2}{\gamma^2} + 2 \frac{(u-\mu_{Pi})(v-\mu_{Pi})}{\gamma^2} \right) p_i(u) q_i(v) du dv \right) \\
&=  \prod_i \left( \int_v \left( 1- \frac{\sigma^2}{\gamma^2} - \frac{(v-\mu_{Qi} + \mu_{Qi}-\mu_{Pi})^2}{\gamma^2} \right) q_i(v) dv \right) \\
&=  \prod_i \left( 1 - \frac{\sigma^2}{\gamma^2} - \frac{\sigma^2}{\gamma^2} - \frac{\delta_i^2}{\gamma^2} \right)
\end{align*}

The second step follows from since integral. The third step follows from independence of the coordinates. The fourth step follows from taylor expansion. The final few steps follow from the definition of second moment of the distributions (see Section~\ref{sec:doubint} of the Appendix). Combining the above terms, we have
\begin{align*}
\MMD^2 &\approx \prod_i \left( 1 - \frac{2\sigma^2}{\gamma^2} \right) + \prod_i \left( 1 - \frac{2\sigma^2}{\gamma^2} \right) - 2 \prod_i \left( 1 - \frac{\sigma^2}{\gamma^2} - \frac{\sigma^2}{\gamma^2} - \frac{\delta_i^2}{\gamma^2} \right) \\ 
&\approx 1 - \sum_i \frac{2\sigma^2}{\gamma^2} + 1 - \sum_i \frac{2\sigma^2}{\gamma^2} - 2 \left( 1 - \sum_i \frac{\sigma^2}{\gamma^2} - \sum_i \frac{\sigma^2}{\gamma^2} - \sum_i \frac{\delta_i^2}{\gamma^2}  \right) \\
&= \frac{2\|\delta\|^2}{\gamma^2}
\end{align*}

%\newpage

\section{Proof of Lemma 2}\label{sec:var}

The variance for the linear time MMD is given by 
\begin{align*}
var_{z,z,'}(h(z,z')) &= \E_{z,z'}[h^2(z,z')] - (\E_{z,z'}h(z,z'))^2
\end{align*}
where $h(z,z') = k(x,x') + k(y,y') - k(x,y') - k(x',y)$ where $x,x' \sim P$ and $y,y' \sim Q$ and $\E_{z,z'}[h(z,z')] = MMD^2$. Hence
the second term is just $(\E_{z,z'}h(z,z'))^2 = (MMD^2)^2$. Let us concentrate on the first term:
\begin{align*}
 \E_{z,z'}[h^2(z,z')] &= \textcolor{red}{E_{x,x'\sim P} k^2(x,x')} + \textcolor{red}{E_{y,y' \sim Q} k^2(y,y')} + \textcolor{blue}{2E_{x\sim P,y \sim Q} k^2(x,y)} \\
 &+ \textcolor{Bittersweet}{2 E_{x,x' \sim P,y,y' \sim Q} k(x,x')k(y,y')} + \textcolor{NavyBlue}{2 E_{x,x' \sim P,y,y' \sim Q} k(x,y')k(x',y) }\\
& - \textcolor{magenta}{4E_{x,x' \sim P,y \sim Q} k(x,x')k(x,y)} - \textcolor{magenta}{4E_{x \sim P,y,y' \sim Q} k(x,y)k(y,y') }
\end{align*} Hence, there are five kinds of terms to calculate
\begin{enumerate}
\item \textcolor{red}{$E_{x,x'\sim P} k^2(x,x')$} (from which \textcolor{red}{$E_{y,y'\sim Q} k^2(y,y')$} can follow)
\item \textcolor{blue}{$E_{x\sim P,y \sim Q} k^2(x,y)$}
\item \textcolor{Bittersweet}{$E_{x,x' \sim P,y,y' \sim Q} k(x,x')k(y,y')$}
\item \textcolor{NavyBlue}{$E_{x,x' \sim P,y,y' \sim Q} k(x,y')k(x',y)$}
\item \textcolor{magenta}{$E_{x,x' \sim P,y \sim Q} k(x,x')k(x,y)$} (from which \textcolor{magenta}{$E_{x \sim P,y,y' \sim Q} k(x,y)k(y,y')$} can follow)
\end{enumerate}
Let us calculate these five terms in order. 
%\textbf{Approximations}: We use two kinds of approximations. $\approx$ will denote the approximation derived by the Gaussian kernel's Taylor expansion. $\approx$ will denote the approximation by writing products $(1-x)(1-y)$ as sums $(1-x-y)$. Both approximations are only valid when $\gamma$ is large, and one can ignore higher powers of $1/\gamma$.

\subsection{Term 1: \textcolor{red}{$E_{x,x'\sim P} k^2(x,x')$}}

\begin{align*}
& = \quad \int_{x,x' \sim P} e^{-2\frac{\|x-x'\|^2}{\gamma^2}} p(x) p(x') dx dx' \\
&= \quad \prod_i \left(\int_{x,x'} e^{-2\frac{(x_i-x_i')^2}{\gamma^2}} p_i(x_i) p_i(x_i') dx_i dx_i' \right) \\
&\approx \quad \prod_i \left(1 - \frac{4\sigma^2}{\gamma^2} + \frac{4\mu_{4}}{\gamma^4} + \frac{12\sigma^4}{\gamma^4}\right) \\
%&\approx \quad 1 - \frac{4 \sum_{i} \sigma^2}{\gamma^2} + \frac{4\sum_i \mu_{4}}{\gamma^4} + \frac{12\sum_i \sigma^4}{\gamma^4} + \frac{8 \sum_{i \neq j} \sigma^4}{\gamma^4} \\
&\approx \quad \textcolor{red}{1 - \frac{4 d \sigma^2}{\gamma^2} + \frac{4d\mu_4}{\gamma^4} + \frac{12 d \sigma^4}{\gamma^4} + \frac{8d(d-1)\sigma^4}{\gamma^4}}
\end{align*}

The third step follows from our calculations in Section~\ref{sec:doubint} of the Appendix. Note that the extra terms arise from considering all cross terms with denominator $\gamma^4$.
\subsection{Term 2: \textcolor{blue}{$E_{x\sim P,y \sim Q} k^2(x,y)$}}
\begin{align*}
& = \quad \int_{x 
\sim P} \int_{y' \sim Q} e^{-2\frac{\|x-y'\|^2}{\gamma^2}} p(x) q(y') dx dy'\\
&= \quad \prod_i \int \int e^{-2\frac{(x_i-y_i')^2}{\gamma^2}} p_i(x_i) q_i(y_i') dx_i dy_i' \\
&\approx \quad \prod_i \Bigg(1 - \frac{4\sigma^2}{\gamma^2} - \frac{2\delta_i^2}{\gamma^2}    + \frac{4\mu_{4}}{\gamma^4}  + \frac{24\sigma^2\delta_i^2}{\gamma^4} + \frac{12\sigma^4}{\gamma^4}  + \frac{2\delta_i^4}{\gamma^4}\Bigg) \\
&\approx \quad \textcolor{blue}{ 1 - \frac{4d\sigma^2}{\gamma^2} - \frac{2\|\delta\|^2}{\gamma^2}
 + \frac{4d\mu_4}{\gamma^4} + \frac{24 \sigma^2 \|\delta\|^2}{\gamma^4} 
+ \frac{12d\sigma^4}{\gamma^4} + \frac{\cancel{2\|\delta\|_4^4}}{\gamma^4} + \frac{8d(d-1)\sigma^4}{\gamma^4} + \frac{8(d-1)\sigma^2\|\delta\|^2}{\gamma^4}+ \frac{2\|\delta\|^4 \cancel{- 2\|\delta\|_4^4}}{\gamma^4}  }
\end{align*}
The third step follows from our calculations in Section~\ref{sec:doubint} of the Appendix.

\subsection{Term 3: \textcolor{Bittersweet}{$E_{x,x' \sim P,y,y' \sim Q} k(x,x')k(y,y')$}}
\begin{align*}
& = \quad \int_{x,x' 
\sim P} \int_{y,y' \sim Q} e^{-\frac{\|x-x'\|^2}{\gamma^2}} e^{-\frac{\|y-y'\|^2}{\gamma^2}} p(x) p(x') q(y) q(y') dx dx' dy dy'\\
&=\quad  \prod_i \int \int e^{-\frac{(x_i - x_i')^2}{\gamma^2}} p_i(x_i) p_i(x_i') dx_i dx_i' \prod_i \int \int e^{-\frac{(y_i - y_i')^2}{\gamma^2}} q_i(y_i) q_i(y_i') dy_i dy_i' \\
&\approx \quad \prod_i \left( 1 - \frac{2\sigma^2}{\gamma^2} + \frac{\mu_{4}}{\gamma^4} + \frac{3\sigma^4}{\gamma^4} \right) \left( 1 - \frac{2\sigma^2}{\gamma^2} + \frac{\mu_{4}}{\gamma^4} + \frac{3\sigma^4}{\gamma^4} \right) \\
&\approx \quad \prod_i \left(1 - \frac{4\sigma^2}{\gamma^2}  + \frac{2\mu_{4}}{\gamma^4}  + \frac{10\sigma^4}{\gamma^4}   \right) \\
%&\approx \quad 1 - \frac{4\sum_i\sigma^2}{\gamma^2} + \frac{2\sum_i\mu_{4}}{\gamma^4} + \frac{10\sum_i\sigma^4}{\gamma^4}    + \frac{8\sum_{i \neq j} \sigma^4}{\gamma^4}\\
&\approx \quad \textcolor{Bittersweet}{1 - \frac{4d\sigma^2}{\gamma^2} + \frac{2d \mu_4}{\gamma^4} + \frac{10d\sigma^4}{\gamma^4}
 + \frac{8d(d-1)\sigma^4}{\gamma^4} }
\end{align*}
The third step follows from our calculations in Section~\ref{sec:doubint} of the Appendix.

\subsection{Term 4: \textcolor{NavyBlue}{$E_{x,x' \sim P,y,y' \sim Q} k(x,y)k(x',y')$}}
\begin{align*}
& = \quad \int_{x,x' 
\sim P} \int_{y,y' \sim Q} e^{-\frac{\|x-y\|^2}{\gamma^2}} e^{-\frac{\|x'-y'\|^2}{\gamma^2}} p(x) p(x') q(y) q(y') dx dx' dy dy'\\
&  \approx \quad \prod_i \Bigg(1 - \frac{2\sigma^2}{\gamma^2} - \frac{\delta_i^2}{\gamma^2}  + \frac{\mu_{4}}{\gamma^4} +  \frac{6\sigma^2\delta_i^2}{\gamma^4} + \frac{3\sigma^4}{\gamma^4} + \frac{\delta_i^4}{2\gamma^4} \Bigg)^2 \\
&= \quad \prod_i \left( 1  - \frac{4\sigma^2}{\gamma^2} - \frac{2\delta_i^2}{\gamma^2}  + \frac{2\mu_{4}}{\gamma^4} +  \frac{16\sigma^2\delta_i^2}{\gamma^4} + \frac{10\sigma^4}{\gamma^4}  + \frac{2\delta_i^4}{\gamma^4} \right) \\
&\approx \quad  \textcolor{NavyBlue}{1 - \frac{4 d \sigma^2}{\gamma^2} - \frac{2\|\delta\|^2}{\gamma^2} + \frac{2d\mu_4}{\gamma^4} + \frac{16\sigma^2\|\delta\|^2}{\gamma^4} + \frac{10d \sigma^4}{\gamma^4} +  \cancel{\frac{2\|\delta\|_4^4}{\gamma^4}}  + \frac{8d(d-1)\sigma^4}{\gamma^4} + \frac{2\|\delta\|^4 \cancel{- 2\|\delta\|_4^4}}{\gamma^4} + \frac{8(d-1)\sigma^2\|\delta\|^2}{\gamma^4} }
\end{align*}
The third step follows from our calculations in Section~\ref{sec:doubint} of the Appendix.

\subsection{Term 5: \textcolor{magenta}{$E_{x,x' \sim P,y \sim Q} k(x,x')k(x,y)$ }}
\begin{align*}
& = \quad \int_{x,x' \sim P} \int_{y \sim Q} e^{-\frac{\|x-x'\|^2}{\gamma^2}} e^{-\frac{\|x'-y\|^2}{\gamma^2}} p(x) p(x') q(y) dx dx' dy \\
&= \quad \prod_i \left(\int \int e^{-\frac{\|x_i-x_i'\|^2}{\gamma^2}} e^{-\frac{\|x_i'-y_i\|^2}{\gamma^2}} p_i(x_i) p_i(x_i') q(y_i) \right) \\
&\approx \quad \prod_i \Bigg(1 -  \frac{4\sigma^2}{\gamma^2}  - \frac{\delta_i^2}{\gamma^2} + 
\frac{3\mu_{4}}{\gamma^4} + \frac{8\sigma^2\delta_i^2}{\gamma^4} + \frac{9\sigma^4}{\gamma^4}  + \frac{\delta^4}{2\gamma^4} + \frac{2\mu_{3}\delta_i}{\gamma^4}\Bigg) \\
&\approx \quad \textcolor{magenta}{ 1 - \frac{4d\sigma^2}{\gamma^2} - \frac{\|\delta\|^2}{\gamma^2}  + \frac{3d \mu_4}{\gamma^4} +\frac{8\sigma^2 \|\delta\|^2}{\gamma^4} + \frac{9 d \sigma^4}{\gamma^4} + \cancel{\frac{\|\delta\|_4^4}{2\gamma^4}} + \frac{2\mu_3 \sum_i\delta_i}{\gamma^4} + \frac{8d(d-1) \sigma^4}{\gamma^4}  +  \frac{4(d-1)\sigma^2 \|\delta\|^2 }{\gamma^4} + \frac{\|\delta\|^4 \cancel{- \|\delta\|_4^4}}{2\gamma^4}}
\end{align*}

The second step follows from our calculations in Section~\ref{sec:tripleint} of the Appendix. Combining the all the terms above, we get the following bound on the variance.

\subsection{The bound on $ \E_{z,z'}[h^2(z,z')]$}

\begin{align*}
 &\approx \quad \textcolor{red}{E_{x,x'\sim P} k^2(x,x')} + \textcolor{red}{E_{y,y' \sim Q} k^2(y,y')} + \textcolor{blue}{2E_{x\sim P,y \sim Q} k^2(x,y)} \\
 & \quad \quad + \textcolor{Bittersweet}{2 E_{x,x' \sim P,y,y' \sim Q} k(x,x')k(y,y')} + \textcolor{NavyBlue}{2 E_{x,x' \sim P,y,y' \sim Q} k(x,y')k(x',y) }\\
& \quad \quad - \textcolor{magenta}{4E_{x,x' \sim P,y \sim Q} k(x,x')k(x,y)} - \textcolor{magenta}{4E_{x \sim P,y,y' \sim Q} k(x,y)k(y,y') }\\
&= \quad \textcolor{red}{\left( \cancel{1 - \frac{4 d \sigma^2}{\gamma^2}} + \cancel{\frac{4d\mu_4}{\gamma^4}} + \boxed{\frac{12 d \sigma^4}{\gamma^4}} + \cancel{\frac{8d(d-1)\sigma^4}{\gamma^4}} \right)}\\
& \quad + \textcolor{red}{\left( \cancel{1 - \frac{4 d \sigma^2}{\gamma^2}} + \cancel{\frac{4d\mu_4}{\gamma^4}} + \boxed{\frac{12 d \sigma^4}{\gamma^4}} + \cancel{\frac{8d(d-1)\sigma^4}{\gamma^4}} \right)}\\
&  \quad + \textcolor{blue}{2 \left( \cancel{1 - \frac{4d\sigma^2}{\gamma^2} - \frac{2\|\delta\|^2}{\gamma^2}}
 + \cancel{\frac{4d\mu_4}{\gamma^4}} 
+ \boxed{\frac{24  \sigma^2 \|\delta\|^2}{\gamma^4}}  +\boxed{\frac{12d\sigma^4}{\gamma^4}} + \cancel{\frac{8d(d-1)\sigma^4}{\gamma^4}} + \cancel{\frac{8(d-1)\sigma^2\|\delta\|^2}{\gamma^4}} + \boxed{\frac{2\|\delta\|^4}{\gamma^4}} \right)}\\
&  \quad + \textcolor{Bittersweet}{2\left( \cancel{1 - \frac{4d\sigma^2}{\gamma^2}} + \cancel{\frac{2d \mu_4}{\gamma^4}} + \boxed{\frac{10d\sigma^4}{\gamma^4}}
 + \cancel{\frac{8d(d-1)\sigma^4}{\gamma^4}} \right)}\\
 &   \quad + \textcolor{NavyBlue}{2\left( \cancel{1 - \frac{4 d \mu_{2}}{\gamma^2} - \frac{2\|\delta\|^2}{\gamma^2}} + \cancel{\frac{2d\mu_4}{\gamma^4}} + \boxed{\frac{16\sigma^2\|\delta\|^2}{\gamma^4}} + \boxed{\frac{10d \sigma^4}{\gamma^4}} + \cancel{\frac{8d(d-1)\sigma^4}{\gamma^4}}  + \boxed{\frac{2\|\delta\|^4}{\gamma^4}} + \cancel{\frac{8(d-1)\sigma^2\|\delta\|^2}{\gamma^4}}  \right)}\\
 &  \quad - \textcolor{magenta}{ 4\left( \cancel{1 - \frac{4d\sigma^2}{\gamma^2} - \frac{\|\delta\|^2}{\gamma^2}}  + \cancel{\frac{3d \mu_4}{\gamma^4}} +\boxed{\frac{8\sigma^2 \|\delta\|^2}{\gamma^4}} + \boxed{\frac{9 d \sigma^4}{\gamma^4}} +  \cancel{\frac{2d\mu_3 \sum_i\delta_i}{\gamma^4}} +\cancel{\frac{8d(d-1) \sigma^4}{\gamma^4}} + \cancel{\frac{4(d-1)\sigma^2 \|\delta\|^2 }{\gamma^4}} + \boxed{\frac{\|\delta\|^4}{2\gamma^4}} \right)}\\
  &  \quad - \textcolor{magenta}{ 4\left( \cancel{1 - \frac{4d\sigma^2}{\gamma^2} - \frac{\|\delta\|^2}{\gamma^2}}  + \cancel{\frac{3d \mu_4}{\gamma^4}} +\boxed{\frac{8\sigma^2 \|\delta\|^2}{\gamma^4}} + \boxed{\frac{9 d \sigma^4}{\gamma^4}} -  \cancel{\frac{2d\mu_3 \sum_i\delta_i}{\gamma^4}} +\cancel{\frac{8d(d-1) \sigma^4}{\gamma^4}} + \cancel{\frac{4(d-1)\sigma^2 \|\delta\|^2 }{\gamma^4}} + \boxed{\frac{\|\delta\|^4}{2\gamma^4}} \right)}\\
 &= \boxed{\frac{4\|\delta\|^4 }{\gamma^4} + \frac{16 d \sigma^4}{\gamma^4}  + \frac{16 \sigma^2 \|\delta\|^2}{\gamma^4}}
\end{align*} 

Finally, using the bound derived above on $\mathbb{E}_{z,z'}[h^2(z,z')]$, the bound on variance is
$$
\text{var}_{z,z,'}(h(z,z')) = \E_{z,z'}[h^2(z,z')] - (\E_{z,z'}h(z,z'))^2 = \frac{16 d \sigma^4}{\gamma^4}  + \frac{16 \sigma^2 \|\delta\|^2}{\gamma^4}. \hfill \qedhere
$$

\section{Proof of Lemma 3}
\subsection{Upper bound on $\tau_4$}
We derive the upper bound on $\tau_4$ in this section. An upper bound on $E_{z,z'}[(h(z,z') - E_{z,z'}[h(z,z')])^4]$ can be obtain in the following manner.
First note that
\begin{align*}
E_{z,z'}[(h(z,z') &- E_{z,z'}[h(z,z')])^4] = E[h^4(z,z')] - 3 (\MMD^2)^4 - 4 E[h^3(z,z')] \MMD^2 + 6 E[h^2(z,z')] (\MMD^2)^2 \\
&=  16\kappa_4 - \frac{48\|\delta\|^8}{\gamma^8} - 64 \kappa_3 \frac{\|\delta\|^2}{\gamma^2}  + \frac{96\|\delta\|^8 }{\gamma^8} + \frac{384 d \sigma^4\|\delta\|^4}{\gamma^8}  + \frac{384 \sigma^2 \|\delta\|^6}{\gamma^8} 
\end{align*}
where $\kappa_4 = E[h^4(z,z')]$ and $\kappa_3 = E[h^3(z,z')]$.

\subsection*{Calculations for $\kappa_4$}

We now calculate an upper bound to $E_{z,z'}[h^4(z,z')]$ in the following manner. With slight abuse of notation, we use $x_i$ to denote the $i^{\text{th}}$ coordinate of $x$.
We first note that
\begin{align*}
E_{z,z'}[h^4(z,z')] &= E_{z,z'} [k(x,x') + k(y,y') - k(x,y') - k(x',y)]^4 \\
&\approx E_{z,z'} \left[1 - \frac{\|x - x'\|^2}{\gamma^2} + 1 - \frac{\|y - y'\|^2}{\gamma^2} - 1 + \frac{\|x - y'\|^2}{\gamma^2} - 1 + \frac{\|x' - y'\|^2}{\gamma^2} \right]^4 \\
&= 16 E_{z,z'}\left[\frac{(x^\top x' + y^\top y' - x^\top y' - x'^\top y)}{\gamma^2} \right]^4 \\
& = 16 E_{z,z'}\left[\frac{\sum_{j=1}^d (x_j - y_j)(x'_j - y'_j)}{\gamma^2} \right]^4 \\
&= \frac{16}{\gamma^8} E_{z,z'}\left[\sum_{k_1 + \cdots + k_d = 4} {4 \choose k_1 \cdots k_d} \prod_{1 \leq i \leq d} (x_i - y_i)^{k_i}(x'_i - y'_i)^{k_i}\right] \\
&= \frac{16}{\gamma^8} \sum_{k_1 + \cdots + k_d = 4} {4 \choose k_1 \cdots k_d} \prod_{1 \leq i \leq d}(E_z[(x_i - y_i)^{k_i}])^2\\
\end{align*}

The above summation splits into five different sums, based on the different ways to write $k_1 + \cdots + k_d = 4$ - we derive these terms using the calculations in Section~\ref{sec:doubint} and Section~\ref{sec:tripleint}, as well as some terms from the Variance calculations in Section~\ref{sec:var}, and explain in brackets which way to sum the $k_i$s to 4 was used.
\begin{align*}
\kappa \quad &= \quad \color{Bittersweet}{\frac{1}{\gamma^8} \sum_i [2\mu_4 + 12\sigma^2\delta_i^2 + 6\sigma^4 + \delta_i^4]^2} \quad (\text{using (4,0,0...)}) \\
& + \quad  \color{magenta}{\frac{4}{\gamma^8} \sum_{i \neq j} (\delta_i^3 + 6\sigma^2\delta_i)^2 \delta_j^2} \quad (\text{using (3,1,0,0...)})  \\
& + \quad \color{NavyBlue}{\frac{3}{\gamma^8} \sum_{i \neq j} (4\sigma^4 + \delta_i^4 + 4 \sigma^2\delta_i^2)(4\sigma^4 + \delta_j^4 + 4\sigma^2\delta_j^2)} \quad (\text{using (2,2,0,0...)})  \\
& + \quad \color{red}{\frac{6}{\gamma^8} \sum_{i \neq j \neq k} (4\sigma^4 + \delta_i^4 + 4\sigma^2\delta_i^2) \delta_j^2 \delta_k^2} \quad (\text{using (2,1,1,0,0...)})\\
&+ \quad \color{blue}{ \frac{1}{\gamma^8} \sum_{i \neq j \neq k \neq l} \delta_i^2 \delta_j^2 \delta_k^2 \delta_l^2} \quad (\text{using (1,1,1,1,0,0...)})
\end{align*}

Expanding the each of the above terms further, we get

\begin{align*}
\text{\color{Bittersweet}{Term 1:}} \quad \quad &\frac1{\gamma^8} \Bigg[4d\mu_4^2 + (144+12) \sigma^4 \sum_i \delta_i^4 + 36 \sigma^8 d + \sum_i \delta_i^8 \\
& + 48 \mu_4 \sigma^2 \|\delta\|^2 + 24 d \mu_4 \sigma^4 + 4 \mu_4 \sum_i \delta_i^4 + 144 \sigma^6 \|\delta\|^2 + 24 \sigma^2 \sum_i \delta_i^6 \Bigg] \\
 \text{\color{magenta}{Term 2:}} \quad \quad & \frac{4}{\gamma^8} \Bigg[ \sum_{i\neq j} \delta_i^6 \delta_j^2 + 36 \sigma^4 \sum_{i \neq j} \delta_i^2 \delta_j^2 + 12\sigma^2 \sum_{i \neq j} \delta_i^4 \delta_j^2 \Bigg]\\
\text{\color{NavyBlue}{Term 3:}} \quad \quad & \frac{3}{\gamma^8} \Bigg[ 8d(d-1) \sigma^8 + \sum_{i\neq j} \delta_i^4 \delta_j^4 + 8 \sigma^4 (d-1) \sum_i \delta_i^4 + 32 \sigma^6 \|\delta\|^2 (d-1) + 8 \sigma^2 \sum_{i \neq j} \delta_i^4 \delta_j^2 + 16\sigma^4 \sum_{i \neq j} \delta_i^2 \delta_j^2 \Bigg]\\
\text{\color{red}{Term 4:}} \quad \quad & \frac{6}{\gamma^8} \Bigg[ 4\sigma^4 (d-2) \sum_{i \neq j}\delta_i^2 \delta_j^2 +  \sum_{i \neq j \neq k} \delta_i^4 \delta_j^2 \delta_k^2 + 4 \sigma^2 \sum_{i \neq j \neq k} \delta_i^2 \delta_j^2 \delta_k^2 \Bigg] \\
\text{\color{blue}{Term 5:}} \quad \quad  & \frac{1}{\gamma^8} \Bigg[ \sum_{i \neq j \neq k \neq l} \delta_i^2 \delta_j^2 \delta_k^2 \delta_l^2 \Bigg] \\
\end{align*}

%Combining all the above terms, we have
%\begin{align*}
%\kappa_4 &= 4d\mu_4^2 + 36\sigma^8 d + 24d\mu_4\sigma^4 + 24d(d-1)\sigma^8 + \left(96d \sigma^6 + 48\sigma^6 +  48 \mu_4 \sigma^2 \right) \sum_i \delta_i^2 \\
%& \quad \quad + \left(132 \sigma^4 + 4 \mu_4 \right) \sum_i \delta_i^4 + 144 \sigma^4 \sum_{i \neq j} \delta_i^2 \delta_j^2 
%\end{align*}

\subsection*{Calculations for $\kappa_3$}

Similar to the multinomial expansion for $\kappa_4$, we have
\begin{align*}
\kappa_3 \quad &= \quad  \color{magenta}{\frac{1}{\gamma^6} \sum_{i} (\delta_i^6 + 36\sigma^4\delta_i^2 + 12\sigma^2\delta_i^4)} \quad (\text{using (3,0,0,0...)})  \\
& + \quad \color{red}{\frac{3}{\gamma^6} \sum_{i \neq j} (4\sigma^4 + \delta_i^4 + 4\sigma^2\delta_i^2) \delta_j^2} \quad (\text{using (2,1,0,0...)})\\
&+ \quad \color{blue}{ \frac{1}{\gamma^6} \sum_{i \neq j \neq k} \delta_i^2 \delta_j^2 \delta_k^2} \quad (\text{using (1,1,1,0,0...)})
\end{align*}

Using the above expansion, we get
\begin{align*}
\kappa_3 \frac{\|\delta\|^2}{\gamma^2} \quad &= \quad  \color{magenta}{\frac{1}{\gamma^8} \sum_{i \neq j} (\delta_i^6 \delta_j^2 + 36\sigma^4\delta_i^2 \delta_j^2 + 12\sigma^2\delta_i^4 \delta_j^2)} \\
& + \quad  \color{magenta}{\frac{1}{\gamma^8} \sum_{i} (\delta_i^8 + 36\sigma^4\delta_i^4 + 12\sigma^2\delta_i^6)} \\
& + \quad \color{red}{\frac{3}{\gamma^8} \sum_{i \neq j \neq k} (4\sigma^4 \delta_j^2 \delta_k^2 + \delta_i^4 \delta_j^2 \delta_k^2 + 4\sigma^2\delta_i^2\delta_j^2 \delta_k^2)} \\
& + \quad \color{red}{\frac{3}{\gamma^8} \sum_{i \neq j} (4\sigma^4 \delta_i^2 \delta_j^2 + \delta_i^6 \delta_j^2 + 4\sigma^2\delta_i^4\delta_j^2)} \\
& + \quad \color{red}{\frac{3}{\gamma^8} \sum_{i \neq j} (4\sigma^4 \delta_j^4 + \delta_i^4 \delta_j^4 + 4\sigma^2\delta_i^2\delta_j^4)} \\
&+ \quad \color{blue}{ \frac{1}{\gamma^8} \sum_{i \neq j \neq k \neq l} \delta_i^2 \delta_j^2 \delta_k^2 \delta_l^2 +\frac{3}{\gamma^8} \sum_{i \neq j \neq k} \delta_i^4 \delta_j^2 \delta_k^2}
\end{align*}

%\begin{align*}
%24 \sigma^4 \sum_i \delta_i^4 + 36 \sum_{i \neq j} \delta_i^2 \delta_j^2
%\end{align*}

Also note the following expansions of $\|\delta\|^8$ and $\|\delta\|^6$.
\begin{align*}
\frac{\|\delta\|^8}{\gamma^8} &= \sum_{i} \delta_i^8 + 4 \sum_{i \neq j} \delta_i^6 \delta_j^2 + 3 \sum_{i \neq j} \delta_i^4 \delta_j^4 + 6 \sum_{i \neq j \neq k} \delta_i^4 \delta_j^2 \delta_k^2 + \sum_{i \neq j \neq k \neq l} \delta_i^2 \delta_j^2 \delta_k^2 \delta_l^2 \\
\frac{\|\delta\|^6}{\gamma^6} &= \sum_{i} \delta_i^6 + 3 \sum_{i \neq j} \delta_i^4 \delta_j^2 +  \sum_{i \neq j \neq k} \delta_i^2 \delta_j^2 \delta_k^2
\end{align*}

\subsection*{Putting all terms together}

Using the above calculations for $\kappa_3$ and $\kappa_4$, we obtain the following bound on $E_{z,z'}[(h(z,z') - E_{z,z'}[h(z,z')])^4]$.

\begin{align*}
E_{z,z'}[(h(z,z') &- E_{z,z'}[h(z,z')])^4] = E[h^4(z,z')] - 3 (\MMD^2)^4 - 4 E[h^3(z,z')] \MMD^2 + 6 E[h^2(z,z')] (\MMD^2)^2 \\
&=  16\kappa_4 - \frac{48\|\delta\|^8}{\gamma^8} - 64 \kappa_3 \frac{\|\delta\|^2}{\gamma^2}  + \frac{96\|\delta\|^8 }{\gamma^8} + \frac{384 d \sigma^4\|\delta\|^4}{\gamma^8}  + \frac{384 \sigma^2 \|\delta\|^6}{\gamma^8} \\
&= \frac{16}{\gamma^2} \Bigg(4d\mu_4^2 + 36\sigma^8 d + 24d\mu_4\sigma^4 + 24d(d-1)\sigma^8 + (96d \sigma^6 + 48\sigma^6 +  48 \mu_4 \sigma^2) \sum_i \delta_i^2 \\
& \quad \quad \quad \quad + (132 \sigma^4 + 4 \mu_4) \sum_i \delta_i^4 + 144 \sigma^4 \sum_{i \neq j} \delta_i^2 \delta_j^2\Bigg) - 64 \left(24 \sigma^4 \sum_i \delta_i^4 + 24 \sigma^4 \sum_{i \neq j} \delta_i^2 \delta_j^2 \right) \\
&= \frac{1}{\gamma^8} \Bigg(64d\mu_4^2 + 576\sigma^8 d + 384d\mu_4\sigma^4 + 384d(d-1)\sigma^8 + (1536d \sigma^6 + 768\sigma^6 +  768 \mu_4 \sigma^2) \sum_i \delta_i^2 \\
& \quad \quad \quad + (576 \sigma^4 + 64 \mu_4) \sum_i \delta_i^4 + 768 \sigma^4 \sum_{i \neq j} \delta_i^2 \delta_j^2 \Bigg) \\
& = (3 + o(1)) V^2
\end{align*}

where we substituted $\kappa_4,\kappa_3$ in the third equationand the $\|\delta\|^6$ and $\|\delta\|^8$ terms perfectly cancel out.

\section{Helpful Calculations for Lemma 1, 2, 3, 4}

\subsection{Double Integrals}
\label{sec:doubint}

\begin{align*}
\int_u \int_v e^{-\frac{(u - v)^2}{\gamma^2}} f(u) g(v) du dv &\approx \int_u \int_v \left[ 1 - \frac{(u - v)^2}{\gamma^2} + \frac{(u - v)^4}{2\gamma^4} \right] f(u) g(v) du dv \\
%&= 1 - \frac{\sigma^2}{\gamma^2} + \int \frac{(u - v)^4}{2\gamma^4} - \frac{(v - \mu_{f})^2}{\gamma^2} f(u) g(v) du dv \\
&= 1 - \frac{2\sigma^2}{\gamma^2}  - \frac{\delta^2}{\gamma^2} + \frac{\mu_4}{\gamma^4} + \frac{6\sigma^2\delta^2}{\gamma^4} + \frac{3\sigma^4}{\gamma^4} + \frac{\delta^4}{2\gamma^4}
\end{align*}

because $\int \int \frac{(u - v)^2}{\gamma^2} f(u) g(v) du dv$
\begin{align*}
 &= \int \int \frac{((u - \mu_{f}) - (v - \mu_{f}))^2}{\gamma^2} f(u) g(v) du dv \\
&= \int \left( \frac{\sigma^2}{\gamma^2} + \frac{(v - \mu_{f})^2}{\gamma^2} \right) g(v) dv = \frac{2\sigma^2}{\gamma^2} + \frac{\delta^2}{\gamma^2}\\
\end{align*}
and $ \int_u \int_v \frac{(u - v)^4}{\gamma^4} f(u) g(v) dv du$
\begin{align*}
 &= \int_u \int_v \frac{((u - \mu_{f}) - (v - \mu_{f}))^4}{\gamma^4} f(u) g(v) dv du \\
&= \int_u \int_v \left( \frac{(u - \mu_{f})^4}{\gamma^4} + \frac{(v - \mu_{f})^4}{\gamma^4} - \frac{4(u - \mu_{f})^3(v - \mu_{f})}{\gamma^4} - \frac{4(u - \mu_{f})(v - \mu_{f})^3}{\gamma^4}  + \frac{6(u - \mu_{f})^2(v - \mu_{f})^2}{\gamma^4} \right) f(u) g(v) du dv \\
&= \int_v \left(  \frac{\mu_4}{\gamma^4} + \frac{(v - \mu_{f})^4}{\gamma^4} - \frac{4\mu_3(v - \mu_{f})}{\gamma^4} + \frac{6\sigma^2(v - \mu_{f})^2}{\gamma^4} \right) g(v) dv \\
&= \frac{\mu_4}{\gamma^4} + \left[ \frac{\mu_4}{\gamma^4} - \frac{4\mu_3\delta}{\gamma^4} + \frac{6\sigma^2\delta^2}{\gamma^4}  + \frac{\delta^4}{\gamma^4} \right]_1 + \left[ \frac{4\mu_3\delta}{\gamma^4} \right]_2 + \left[ \frac{6\sigma^2 (\sigma^2 + \delta^2)}{\gamma^4} \right]_3\\
&= \frac{2\mu_4}{\gamma^4}  + \frac{12\sigma^2\delta^2}{\gamma^4} + \frac{6\sigma^4}{\gamma^4} + \frac{\delta^4}{\gamma^4}
\end{align*}

Finally, we have
\begin{align*}
 \int_u \int_v (u - v)^3 f(u) g(v) dv du &= \int_u \int_v ((u - \mu_f) - (v - \mu_f))^3 f(u) g(v) du dv \\
 &= \int_v \mu_3 - 3\sigma^2(v - \mu_f) - (v - \mu_f)^3 g(v) dv \\
 &= 3\sigma^2 \delta + \delta^3 + 3 \sigma^2 \delta = \delta^3 + 6\sigma^2 \delta.
\end{align*}

\subsection{Triple Integral}
\label{sec:tripleint}

\begin{align*}
& \int_u \int_v \int_y e^{-\frac{(u - v)^2}{\gamma^2}} e^{-\frac{(v - y)^2}{\gamma^2}} f(u) g(v) g(y) du dv dy \\
&= \int_u \int_v \int_y \left[1 - \frac{(u - v)^2}{\gamma^2} + \frac{(u - v)^4}{2 \gamma^4}\right] \left[1 - \frac{(v - y)^2}{\gamma^2} + \frac{(v - y)^4}{2 \gamma^4}\right] f(u) g(v) g(y) du dv dy \\
&= \int_u \int_v \int_y \left[1 - \frac{(u - v)^2}{\gamma^2} - \frac{(v - y)^2}{\gamma^2} + \frac{(u - v)^4}{2 \gamma^4} + \frac{(v - y)^4}{2 \gamma^4} + \frac{(u - v)^2(v - y)^2}{\gamma^4} \right] f(u) g(v) g(y) du dv dy \\
&= 1 -  \frac{2\sigma^2}{\gamma^2} - \frac{\delta^2}{\gamma^2} - \frac{2\sigma^2}{\gamma^2} + 
\frac{1}{2} \Bigg[\frac{2\mu_4}{\gamma^4} + \frac{12\sigma^2\delta^2}{\gamma^4} + \frac{6\sigma^4}{\gamma^4} + \frac{\delta^4}{\gamma^4} \Bigg] + \frac{1}{2} \left[ \frac{2\mu_4}{\gamma^4} + \frac{6\sigma^4}{\gamma^4}\right] \\
&\quad + \frac{1}{\gamma^4} \int_v \left[\sigma^2 + (v - \mu_{f})^2\right] \left[\sigma^2 + (v - \mu_{g})^2\right] g(v) dv \\
&= 1 -  \frac{2\sigma^2}{\gamma^2} - \frac{\delta^2}{\gamma^2} - \frac{2\sigma^2}{\gamma^2} + 
\frac{1}{2} \Bigg[\frac{2\mu_4}{\gamma^4} + \frac{12\sigma^2\delta^2}{\gamma^4} + \frac{6\sigma^4}{\gamma^4} + \frac{\delta^4}{\gamma^4} \Bigg] + \frac{1}{2} \left[ \frac{2\mu_4}{\gamma^4} + \frac{6\sigma^4}{\gamma^4}\right] \\
&\quad + \frac{3\sigma^4}{\gamma^4} + \frac{2\sigma^2\delta^2}{\gamma^4} + \frac{\mu_4}{\gamma^4} - \frac{2\mu_3\delta}{\gamma^4} \\
&= 1 -  \frac{4\sigma^2}{\gamma^2} - \frac{\delta^2}{\gamma^2} + 
\frac{3\mu_4}{\gamma^4} + \frac{8\sigma^2\delta^2}{\gamma^4} + \frac{9\sigma^4}{\gamma^4} + \frac{\delta^4}{2\gamma^4} - \frac{2\mu_3\delta}{\gamma^4}
\end{align*}

The last equality is obtained from the following:
\begin{align*}
\int_v \left[\sigma^2 + (v - \mu_{f})^2\right] \left[\sigma^2 + (v - \mu_{g})^2\right] g(v) dv &= \sigma^4
+ \sigma^2(\sigma^2 + (\mu_{g} - \mu_{f})^2)  + \sigma^4 + \int_v (v - \mu_{f})^2(v - \mu_{g})^2 g(v) dv
\end{align*}

\newpage

\section{Additional Experiments}
\begin{figure} [h!]
\centering
\includegraphics[width=0.4\linewidth]{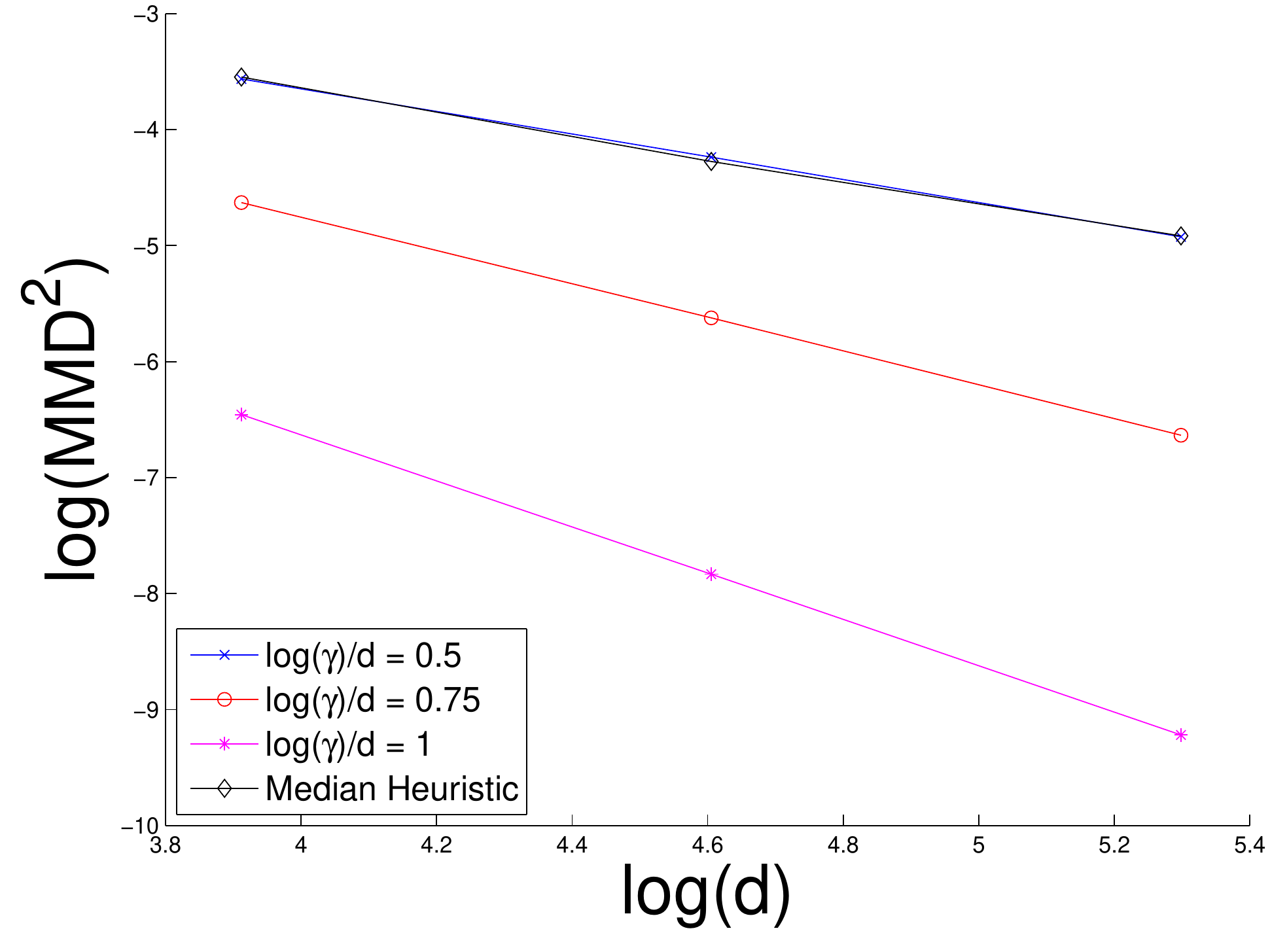}
\includegraphics[width=0.4\linewidth]{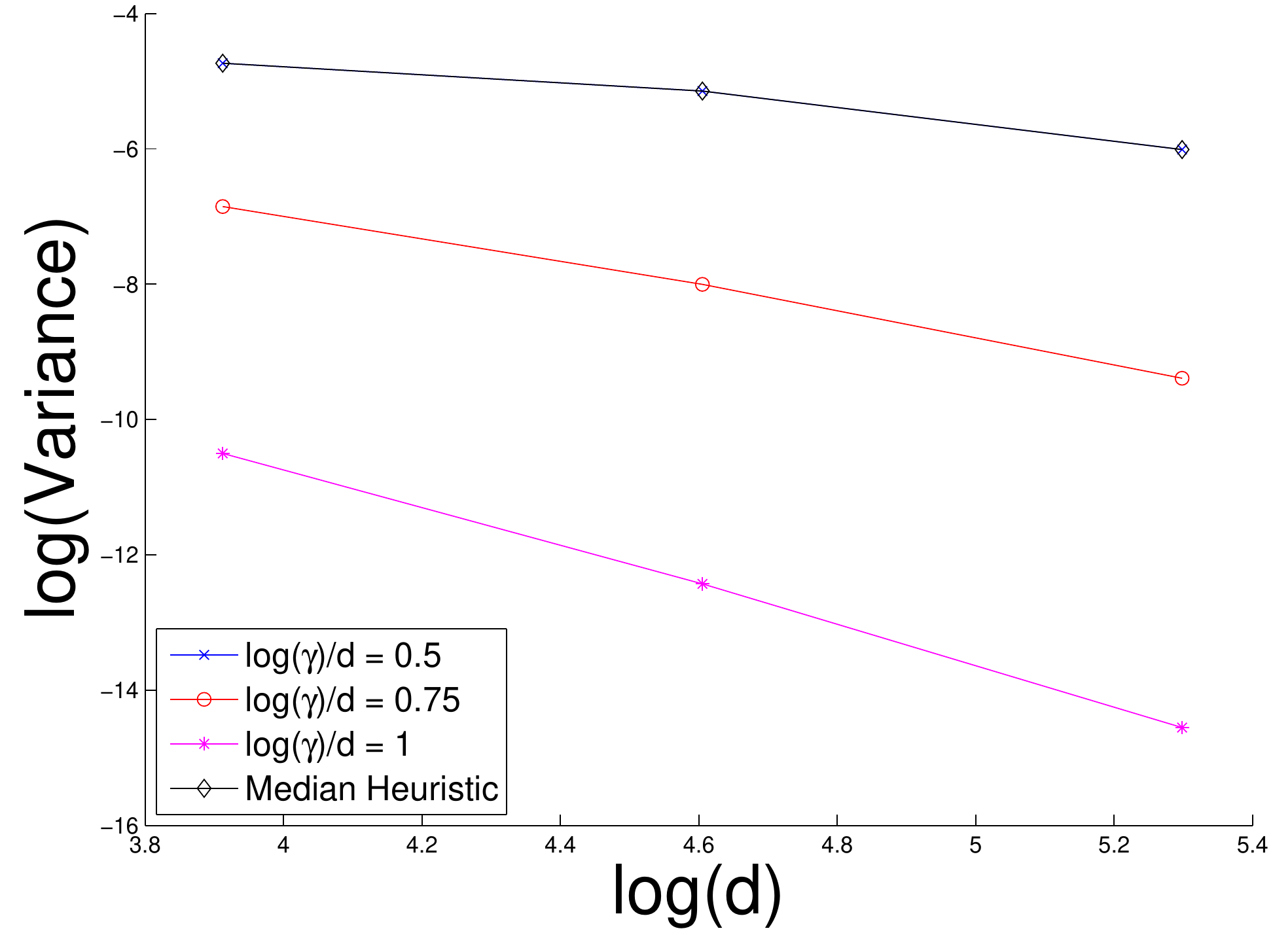}
\caption{A plot for $\MMD^2$ and Variance of linear statistic, when $n=1000$ for Normal distribution with identity covariance and $\Psi = 1$, for bandwidths $d^{0.5},d,d^{0.75}$. Note that these plots provide empirical verification for  Lemma 1 and Lemma 2}.
\label{fig:mmd-variance}
\end{figure}

%\bibliographystyle{plain}
%\bibliography{mmd}

\end{document}